% flatex input: [lapcyc.tex]
\documentclass[%
  paper=a4,%
  11pt,%
  oneside,%
  DIV12,%
]{scrartcl}
\usepackage[T1]{fontenc}
\usepackage[utf8]{inputenc}
\usepackage{textcomp}
\usepackage{tgpagella} \linespread{1.04}
\usepackage{eulervm}
\usepackage{graphicx}
\usepackage{overpic}
\usepackage[outline]{contour}
\contourlength{1.0pt}
\usepackage{amsmath,amsfonts,amssymb,amsthm}
\usepackage{microtype}
\usepackage{mathtools}
\usepackage{enumitem}
\setlist{noitemsep,leftmargin=*}
\usepackage{titlesec}
\usepackage[%
  sort,%
  compress,%
  numbers,%
]{natbib}

\setkomafont{title}{\scshape}

\titleformat{\section}
  {\large\scshape\centering}
  {\thesection.}{1em}{}
  \titlespacing*{\section}
  {0ex}{3.5ex plus .1ex minus .2ex}{1.5ex minus .1ex}

\titleformat{\subsection}[runin]
  {\normalfont\itshape}
  {\S~\thesubsection.}{\wordsep}{}[.]
  \titlespacing{\subsection}
  {0pt}{1ex}{\wordsep}
\titleformat{\subsubsection}[runin]
  {\normalfont}
  {\thesubsubsection.}{\wordsep}{}[.]
  \titlespacing{\subsubsection}
  {0pt}{1ex}{\wordsep}

\newtheoremstyle{hpstheorem}%
  {3pt}%        space above
  {3pt}%        space below
  {\itshape}%   body font
  {}%           indent amount
  {\scshape}%   theorem head font
  {.}%          punctuation after theorem head
  {.5em}%       space after theorem head
  {}%           theorem head spec
\newtheoremstyle{hpsdefinition}%
  {3pt}%        space above
  {3pt}%        space below
  {\upshape}%   body font
  {}%           indent amount
  {\scshape}%   theorem head font
  {.}%          punctuation after theorem head
  {.5em}%       space after theorem head
  {}%           theorem head spec
\theoremstyle{hpstheorem}
\newtheorem{theorem}{Theorem}
\newtheorem{proposition}[theorem]{Proposition}
\newtheorem{corollary}[theorem]{Corollary}
\newtheorem{lemma}[theorem]{Lemma}
\theoremstyle{hpsdefinition}
\newtheorem{definition}[theorem]{Definition}

% \usepackage[thmmarks,hyperref,thref]{ntheorem}
% % \theoremstyle{plain}
% \theoremheaderfont{\normalfont\scshape}
% \theorembodyfont{\itshape}
% \theoremseparator{.}
% \newtheorem{theorem}{Theorem}
% \newtheorem{proposition}[theorem]{Proposition}
% \newtheorem{corollary}[theorem]{Corollary}
% \newtheorem{lemma}[theorem]{Lemma}
% \theorembodyfont{\itshape}
% \newtheorem{definition}[theorem]{Definition}
% \theoremheaderfont{\normalfont\itshape}
% \theorembodyfont{\upshape}
% \newtheorem{remark}[theorem]{Remark}
% \theoremstyle{nonumberplain}
% \theoremsymbol{\ensuremath{\Box}}
% \theoremstyle{nonumberplain}
% \newtheorem{proof}{Proof}

% \usepackage{thmtools}
% \declaretheoremstyle[%
%   headfont=\scshape,%
%   bodyfont=\itshape,%
% ]{thmstyle}
% \declaretheoremstyle[%
%   headfont=\scshape,%
%   bodyfont=\normalfont,%
% ]{defstyle}
% \declaretheoremstyle[%
%   headfont=\itshape,%
%   bodyfont=\normalfont,%
% ]{remstyle}
% \declaretheorem[style=thmstyle]{theorem}
% \declaretheorem[style=thmstyle,numberlike=theorem]{proposition}
% \declaretheorem[style=thmstyle,numberlike=theorem]{corollary}
% \declaretheorem[style=thmstyle,numberlike=theorem]{lemma}
% \declaretheorem[style=defstyle,numberlike=theorem]{definition}
% \declaretheorem[style=remstyle,numberlike=theorem]{remark}

\usepackage{hyperref}
\urlstyle{rm}
\usepackage{bookmark}

\hypersetup{
  pdftitle={Discrete Laplace Cycles of Period Four},%
  pdfauthor={Hans-Peter Schröcker},%
  pdfkeywords={discrete conjugate net, Laplace transform, asymptotic transform, discrete W-congruence, discrete projective differential geometry},%
  pdfproducer={},%
  pdfcreator={},%
  pdfstartview={FitH},%
  pdfborder={0 0 0},%
  colorlinks=false,%
}

\newcommand*{\KSet}{\mathbb{K}}
\newcommand*{\ZSet}{\mathbb{Z}}
\newcommand*{\Lspace}[1][3]{\mathbb{L}^{#1}}
\newcommand*{\Pspace}[1][3]{\mathbb{P}^{#1}}
\newcommand*{\Laplace}[1]{\mathcal{L}_{#1}}
\newcommand*{\Oplane}[1]{\mathcal{O}_{#1}}
\newcommand*{\Tplane}{\mathcal{T}}
\newcommand*{\CR}[4]{\mathop{CR}(#1,#2;#3,#4)}
\newcommand*{\quadric}{\mathcal{Q}}
\newcommand*{\conic}{C}
\newcommand*{\projection}[3]{#1{\raisebox{2pt}{\ensuremath{\uparrow}}}_{#2}^{#3}}

\title{Discrete Laplace Cycles of Period Four}
\author{Hans-Peter Schröcker\\
Unit Geometry and CAD, University Innsbruck}

\begin{document}

\maketitle

\begin{abstract}
  We study discrete conjugate nets whose Laplace sequence is of period
  four. Corresponding points of opposite nets in this cyclic sequence
  have equal osculating planes in different net directions, that is,
  they correspond in an asymptotic transformation. We show that this
  implies that the connecting lines of corresponding points form a
  discrete W-congruence. We derive some properties of discrete Laplace
  cycles of period four and describe two explicit methods for their
  construction.
\end{abstract}

\par\noindent
MSC 2010:
53A20 (primary), % projective differential geometry 
51A20,           % configuration theorems
53A25.           % differential line geometry
\par\noindent
Keywords:
Discrete conjugate net,
Laplace transform,
asymptotic transform,
discrete W-congruence,
discrete projective differential geometry.

\section{Introduction}
\label{sec:introduction}

In \cite{jonas37:_laplacesche_zyklen}, H.~Jonas established a couple
of results pertaining to asymptotic transforms, W-congruences,
conjugate nets and Laplace cycles of period four. This article
explores discrete versions of Jonas' theorems. It blends in with and
extends more recent research on discrete conjugate nets, their Laplace
transforms, discrete W-congruences and, in the limiting case, also
discrete asymptotic nets. The original references to the discrete
theory are \cite{doliwa01:_a-nets_integrable_discrete_geometry,%
  doliwa01:_asymptotic_nets-w-congruences,%
  doliwa00:_transformations,%
  doliwa97:_toda_system,%
  nieszporski02:_discretization_asymptotic_nets}, an exposition of the
current state of research can be found in
\cite{bobenko08:_discrete_differential_geometry}.

Our contribution belongs to the field of discrete projective
differential geometry. But we try to avoid the typical differential or
difference equations and instead resort to synthetic reasoning. It is
in the nature of our subject that we merely prove some incidence
geometric results. This may seem like an old fashioned approach to an
outdated topic. It is needless to say that we do not share this
opinion, but it seems necessary to support our point of view by a few
arguments:
\begin{itemize}
\item Arguably, the lack of ordering principles is one of the reasons
  why pure projective geometry is of little interest to today's
  mathematicians. In contrast, a highly active research field,
  discrete differential geometry, has emerged from the desire to find
  discrete versions of differential geometric theorems
  \cite{bobenko08:_discrete_differential_geometry}. We feel that
  differential geometry can serve as a guideline to single out
  ``interesting'' results from the wealth of projective incidence
  theorems.
\item Our contributions generalize recent results on discrete
  conjugate nets and discrete asymptotic nets
  \cite{doliwa01:_a-nets_integrable_discrete_geometry,%
    doliwa01:_asymptotic_nets-w-congruences,%
    doliwa00:_transformations,%
    doliwa97:_toda_system,%
    nieszporski02:_discretization_asymptotic_nets}. Thus, our topic
  blends in with current research and should not be considered
  outdated.
\item Relying on synthetic reasoning is of a particular appeal in the
  absence of metric structures. The reason is, that synthetic
  arguments clearly exhibit the few fundamental assumptions on which
  the theory is based, thus producing results that can be attributed
  to a very general type of projective geometry.
\end{itemize}

Referring to the last point, our results hold true for
three-dimensional projective geometries over fields of characteristic
$\neq 2$ and with sufficiently many elements (so that the mere
formulation of certain results make sense). By Theorem~7 in
\cite[Section~50]{veblen16:_projective_geometry}, commutativity of the
underlying algebraic structure is equivalent to the validity of the
fundamental theorem of projective geometry and the existence of doubly
ruled surfaces \cite[Section~103]{veblen16:_projective_geometry}. The
last point is crucial for our considerations.

We continue this article by introducing some basic concepts and
notations, like discrete conjugate nets and their Laplace transforms,
in \autoref{sec:preliminaries}. The subsequent
\autoref{sec:asymptotic-transforms} features asymptotic transforms of
(not necessarily conjugate) nets. The main result states that the
discrete line congruence obtained by connecting corresponding points
is a discrete W-congruence, see
\cite{doliwa01:_a-nets_integrable_discrete_geometry,%,
  doliwa01:_asymptotic_nets-w-congruences}. Theorem~\ref{th:13} shows
how to construct asymptotic transforms on a given W-congruence. The
results of this section comprise, as a limiting case, the theory of
discrete asymptotic nets and their W-transform as laid out in
\cite{doliwa01:_a-nets_integrable_discrete_geometry,%
  doliwa01:_asymptotic_nets-w-congruences,%
  nieszporski02:_discretization_asymptotic_nets}.  Examples of
asymptotically related nets can be obtained from a Laplace sequence of
period four. They are studied in
\autoref{sec:periodic-laplace-cycles}. Opposite nets of the cycle are
asymptotically related so that their diagonal congruences are
W-congruences. We conclude this article with two different methods for
the construction of discrete Laplace cycles of period four.

Before continuing, we should mention that it is not difficult to
obtain general conditions on conjugate nets that admit periodic
Laplace cycles (in both, the smooth and the discrete case). Older
references for the smooth case are
\cite{hammond21:_periodic_conjugate_nets,%
  barner58:_geschlossene_laplace_ketten,%
  degen60:_geschlossene_ungerade_laplace_ketten}, newer contributions
include \cite{hu01:_laplace_sequences_toda_equations} and
\cite[Section~4.4]{sasaki06:_line_congruence_transformation}.  The
main problem is not the derivation of conditions but their geometric
interpretation. The mentioned references provide some general results
in this direction for the smooth case. This article is among the first
contributions in the discrete setting.

\section{Preliminaries}
\label{sec:preliminaries}

We denote by $\ZSet$ the set of integers and by $\Pspace$ the
projective space of dimension three over the field $\KSet$. We assume
that the characteristic of $\KSet$ is different from two.

\begin{definition}
  A \emph{discrete net} is a map
  \begin{equation*}
    f\colon \ZSet^2 \to \Pspace,
    \quad
    (i,j) \mapsto f(i,j) \eqqcolon f_i^j.
  \end{equation*}
  (Since we only deal with two-dimensional nets, we use the terser
  notation with upper and lower indices. The even short shift-notation
  of \cite{bobenko08:_discrete_differential_geometry} seems not
  necessary for two-dimensional nets.) A \emph{discrete conjugate net}
  is a discrete net such that every elementary quadrilateral $f_i^j$,
  $f_{i+1}^j$, $f_{i+1}^{j+1}$, $f_i^{j+1}$ is planar.
\end{definition}

The choice of $\ZSet^2$ as parameter space is only a matter of
convenience. It allows us to ignore boundary conditions. We might as
well admit sufficiently large sets of the shape
\begin{equation*}
  \{(i,j) \in \ZSet^2 \mid i_0 < i < i_1 \text{ and } j_0 < j < j_1\}
\end{equation*}
with integers $i_0 < i_1$ and $j_0 < j_1$.

Throughout this paper, we make some regularity assumptions on the nets
under consideration. Denote the span of projective subspaces by the
symbol ``$\vee$''. We call a discrete net regular if for all $(i,j)
\in \ZSet^2$ the \emph{osculating planes}
\begin{equation*}
  \Oplane{1} f_i^j \coloneqq f_{i-1}^j \vee f_i^j \vee f_{i+1}^j,
  \quad\text{and}\quad
  \Oplane{2} f_i^j \coloneqq f_i^{j-1} \vee f_i^j \vee f_i^{j+1},
\end{equation*}
in the first and second net direction are well-defined, that is, of
projective dimension two. For the regularity of a discrete conjugate
net we require additionally, that the four points
\begin{equation*}
  f_i^j,\ f_{i+1}^j,\
  f_{i+1}^{j+1},\
  f_i^{j+1}
\end{equation*}
are not collinear. Non-regular nets are called \emph{singular.}

From a discrete conjugate net $f$ we derive two further discrete
conjugate nets by means of the Laplace transform
(\cite{doliwa97:_toda_system,doliwa00:_transformations}):

\begin{definition}
  The \emph{first and second Laplace transforms} of a regular discrete
  conjugate net are the discrete conjugate nets
  \begin{equation*}
    \begin{aligned}
      \Laplace{1} f\colon \ZSet^2 \to \Pspace,\quad
      (i,j) \mapsto (f_i^j \vee f_{i+1}^j) \cap (f_i^{j+1} \vee f_{i+1}^{j+1}),\\
      \Laplace{2} f\colon \ZSet^2 \to \Pspace,\quad
      (i,j) \mapsto (f_i^j \vee f_i^{j+1}) \cap (f_{i+1}^j \vee f_{i+1}^{j+1}).
    \end{aligned}
  \end{equation*}
  By regularity of $f$, they are well-defined but not necessarily
  regular.
\end{definition}

Note that we adopt the indexing convention of
\cite[pp.~76--77]{bobenko08:_discrete_differential_geometry} since it
is more convenient for our purposes than the convention used in of
\cite{doliwa97:_toda_system,doliwa00:_transformations}. An example is
depicted in the lower right drawing of
\autoref{fig:laplace-construction}, where $\Laplace{1}f \coloneqq h$
and $\Laplace{2}f \coloneqq h$. It is easy to see that $\Laplace{1} f$
and $\Laplace{2} f$ are indeed discrete conjugate nets due to
\begin{equation*}
  \{ \Laplace{k} f_i^j,\ \Laplace{k} f_{i+1}^j,\ \Laplace{k} f_i^{j+1},\ \Laplace{k} f_{i+1}^{j+1} \} \subset \Oplane{k}f_{i+1}^{j+1},
  \quad k \in \{1, 2\}.
\end{equation*}

Let us study repeated application of the Laplace transform. Because of
$\Laplace{1}\Laplace{2}f_i^j = \Laplace{2}\Laplace{1}f =
f_{i+1}^{j+1}$, the composition of two Laplace transforms in different
net directions effects only an index shift of $f$. This is not
interesting so that we focus on the compositions of Laplace transforms
in the same net direction:

\begin{definition}
  The \emph{$i$-th Laplace sequence} to a discrete conjugate net $f$
  is the sequence $l \mapsto \Laplace{i}^l f$ where $\Laplace{i}^lf$
  is recursively defined by $\Laplace{i}^lf = \Laplace{i}
  \Laplace{i}^{l-1}f$ and $\Laplace{i}^{0} f = f$.
\end{definition}

In general, both Laplace sequences
\begin{equation*}
  \Laplace{1}^0f, \Laplace{1}^1f, \Laplace{1}^2f, \ldots
  \quad\text{and}\quad
  \Laplace{2}^0f, \Laplace{2}^1f, \Laplace{2}^2f, \ldots
\end{equation*}
are infinite, at least if the field $\KSet$ is infinite. It is,
however, possible that the iterated construction breaks down at some
point due to net singularities. Another possibility is that the
sequences become periodic. An instance of this is precisely the case
we are interested in:

\begin{definition}
  The Laplace sequences of a discrete conjugate net $f$ are called a 
  \emph{Laplace cycle of period four}, if $\Laplace{11} f =
  \Laplace{22} f$.
\end{definition}

Indeed, in this case we have $\Laplace{1}^{l+4} = \Laplace{1}^l$ and
$\Laplace{2}^{l+4} = \Laplace{2}^l$ for any positive integer $l$. We
suggest to think of a Laplace cycle of period four in terms of the
cyclic sequence of four conjugate nets
\begin{equation*}
  f,\quad
  h \coloneqq \Laplace{1} f,\quad
  g \coloneqq \Laplace{11} f = \Laplace{22} f,\quad
  k \coloneqq \Laplace{2} f.
\end{equation*}
The first or second Laplace sequence started from any of the nets $f$,
$h$, $g$, and $k$ are identical, up to indexing. Thus, the four nets
can be treated on equal footing.  We call the nets $f$ and $g$ (as
well as $h$ and $k$) \emph{opposite} nets of the cycle. Examples are
depicted in \autoref{fig:laplace-construction} and
\autoref{fig:laplace-construction-3d}.

\section{Asymptotic transforms and W-congruences}
\label{sec:asymptotic-transforms}

In a Laplace cycle of period four, the osculating planes in different
directions of opposite nets coincide, for example $\Oplane{1} f =
\Oplane{2} g$ and $\Oplane{2} f = \Oplane{1} g$. Assume that
corresponding points do not coincide ($f_i^j \neq g_i^j$, $h_i^j \neq
k_i^j$) and that, at every vertex, the two osculating planes are
different ($\Oplane{1} f_i^j \neq \Oplane{2}f_i^j$, $\Oplane{1} h_i^j
\neq \Oplane{2}h_i^j$, etc.). Setting $K_i^j \coloneqq f_i^j \vee
g_i^j$ and $L_i^j \coloneqq h_i^j \vee k_i^j$, we then have
\begin{alignat*}{3}
  K_i^j & = \Oplane{1} f_i^j \cap \Oplane{2} f_i^j &  & = \Oplane{2} g_i^j \cap \Oplane{1} g_i^j, \\
  L_i^j & = \Oplane{1} h_i^j \cap \Oplane{2} h_i^j &  & = \Oplane{2} k_i^j \cap \Oplane{1} k_i^j.
\end{alignat*}
In order to capture this relation between $f$ and $g$ (or $h$ and $k$)
in more generality, we study the axis congruence of a discrete net and
its asymptotic transforms. Our terminology goes back to the smooth
case and in particular to \cite{jonas37:_laplacesche_zyklen} and
\cite{wilczynski15:_general_theory_congruences}.

\begin{definition}
  \label{def:axis-congruence}
  The \emph{axis congruence} of a (not necessarily conjugate) net
  $f\colon \ZSet^2 \to \Pspace$ is the map that sends a point $(i,j)
  \in \ZSet^2$ to the line $\Oplane{1} f_i^j \cap \Oplane{2}
  f_i^j$. It is well-defined only for vertices where the two
  osculating planes are different.
\end{definition}

\begin{definition}
  \label{def:asymptotic-transform}
  Two discrete nets $f$ and $g$ are called \emph{asymptotically
    related} or \emph{asymptotic transforms} of each other, if
  $\Oplane{1} f = \Oplane{2} g$ and $\Oplane{2} f = \Oplane{1} g$.
\end{definition}

Using these definitions, we may rephrase our findings of
\autoref{sec:preliminaries} as

\begin{proposition}
  \label{prop:7}
  Opposite nets in a discrete Laplace cycle of period four have the
  same axis congruence and are asymptotically related.
\end{proposition}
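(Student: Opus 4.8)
The plan is to obtain \autoref{prop:7} directly from the incidence facts of \autoref{sec:preliminaries}, so that it reduces, as the text announces, to a rephrasing. Write the cyclic sequence as $f$, $h=\Laplace{1}f$, $g=\Laplace{11}f=\Laplace{22}f$, $k=\Laplace{2}f$. The identity $\Laplace{2}\Laplace{1}X_i^j=X_{i+1}^{j+1}$ (valid for every regular discrete conjugate net $X$) makes the four nets pairwise shifted copies of one another; using the freedom in the choice of indexing (the nets of the cycle agree ``up to indexing'', as noted in \autoref{sec:preliminaries}) I would re-index $g$, replacing $g_i^j$ by $\Laplace{11}f_{i-1}^{j-1}$, so that opposite nets are aligned vertex by vertex, which gives $\Laplace{1}g=k$ and $\Laplace{2}g=h$. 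With this bookkeeping fixed, the proposition is equivalent to the four osculating-plane identities
\[
  \Oplane{1}f=\Oplane{2}g,\qquad \Oplane{2}f=\Oplane{1}g,\qquad \Oplane{1}h=\Oplane{2}k,\qquad \Oplane{2}h=\Oplane{1}k.
\]
The first two are exactly the defining condition of \autoref{def:asymptotic-transform} for the opposite nets $f$ and $g$, the last two for $h$ and $k$; and, granting them, the axis congruence of $g$ sends $(i,j)$ to $\Oplane{1}g_i^j\cap\Oplane{2}g_i^j=\Oplane{2}f_i^j\cap\Oplane{1}f_i^j$, which is the axis line of $f$ at $(i,j)$ in the sense of \autoref{def:axis-congruence}, so opposite nets share their axis congruence as well.

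To prove the osculating-plane identities I would isolate two facts about a single regular conjugate net $X$, calling ``quadrilateral plane'' the plane spanned by an elementary quadrilateral. \emph{(B1)} The incidence displayed in \autoref{sec:preliminaries} puts the elementary quadrilateral of $\Laplace{1}X$ at $(i,j)$ into $\Oplane{1}X_{i+1}^{j+1}$; by regularity of $\Laplace{1}X$ this quadrilateral is non-collinear, hence \emph{spans} that plane, so the quadrilateral plane of $\Laplace{1}X$ at $(i,j)$ equals $\Oplane{1}X_{i+1}^{j+1}$ — and symmetrically for $\Laplace{2}X$ in the second net direction. \emph{(B2)} Since $\Laplace{2}\Laplace{1}X_i^j=X_{i+1}^{j+1}$, the elementary quadrilaterals of $\Laplace{2}(\Laplace{1}X)$ are, up to the $(1,1)$-shift, the quadrilaterals of $X$; feeding this into (B1) applied in the second net direction to the net $\Laplace{1}X$ shows that $\Oplane{2}(\Laplace{1}X)_i^j$ is the quadrilateral plane of $X$ at $(i,j)$, and symmetrically $\Oplane{1}(\Laplace{2}X)_i^j$ is the same plane.

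Chaining (B1) and (B2) completes the argument. For $f$ and $g$: by (B2) with $X=h$, $\Oplane{2}g=\Oplane{2}(\Laplace{1}h)$ is the field of quadrilateral planes of $h=\Laplace{1}f$, which by (B1) with $X=f$ is $\Oplane{1}f$ once indices are matched; symmetrically $\Oplane{1}g=\Oplane{1}(\Laplace{2}k)$ is the field of quadrilateral planes of $k=\Laplace{2}f$, i.e.\ $\Oplane{2}f$. For $h$ and $k$: $\Oplane{2}h=\Oplane{2}(\Laplace{1}f)$ and $\Oplane{1}k=\Oplane{1}(\Laplace{2}f)$ are both the field of quadrilateral planes of $f$ by (B2), hence equal; and, using the normalized relations $h=\Laplace{2}g$ and $k=\Laplace{1}g$, both $\Oplane{1}h=\Oplane{1}(\Laplace{2}g)$ and $\Oplane{2}k=\Oplane{2}(\Laplace{1}g)$ equal the field of quadrilateral planes of $g$, hence coincide. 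I expect the only real labour to be the index bookkeeping — tracking the $(1,1)$-shift hidden in $\Laplace{2}\Laplace{1}$ through the cyclic identification $\Laplace{11}f=\Laplace{22}f$ so that the four nets genuinely line up — together with a disciplined use of the standing regularity hypotheses, which matter here precisely because Laplace transforms need not be regular: one needs the relevant quadrilaterals to be non-collinear (so that they span a plane, not merely a line) and $\Oplane{1}f_i^j\neq\Oplane{2}f_i^j$ at each vertex, so that the axis congruence is defined and equals the line $f_i^j\vee g_i^j$ assumed in the discussion preceding the proposition. There is no deep new idea beyond the reduction to (B1)--(B2) and the clean chaining.
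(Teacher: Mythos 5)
Your proposal is correct and follows essentially the same route as the paper, which gives no separate proof but presents the proposition as a direct rephrasing of the incidence relation $\{\Laplace{k}f_i^j,\ \Laplace{k}f_{i+1}^j,\ \Laplace{k}f_i^{j+1},\ \Laplace{k}f_{i+1}^{j+1}\}\subset\Oplane{k}f_{i+1}^{j+1}$ displayed in \autoref{sec:preliminaries}. Your facts (B1)--(B2) and the explicit $(1,1)$-shift normalization of $g$ merely spell out the index bookkeeping and regularity caveats that the paper subsumes under ``up to indexing''.
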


Note that the concepts of osculating plane, axis congruence and
asymptotic transform make perfect sense in the smooth setting and
Proposition~\ref{prop:7} holds true as well,
see~\cite{jonas37:_laplacesche_zyklen}.

For an important class of discrete nets the axis congruence is
undefined at every vertex. These nets are characterized by $\Oplane{1}
f = \Oplane{2} f$ and are called \emph{asymptotic nets} or
\emph{A-nets}, see \cite{doliwa01:_asymptotic_nets-w-congruences,%
  doliwa01:_a-nets_integrable_discrete_geometry,%
  nieszporski02:_discretization_asymptotic_nets} or
\cite[Section~2.4]{bobenko08:_discrete_differential_geometry}. We
usually exclude them from our considerations. It should, however, be
mentioned that asymptotic nets can be obtained from a general net $f$
by a suitable passage to the limit that satisfies $\lim\Oplane{1}f =
\lim\Oplane{2}f$. In this case, the common limit of both osculating
planes is the tangent plane $\Tplane f$ at the respective vertex. Two
asymptotic nets $f_i^j$ and $g_i^j$ are said to be \emph{W-transforms}
of each other, if the lines $f_i^j \vee g_i^j$ are contained in both
tangent planes $\Tplane f_i^j$ and $\Tplane g_i^j$. Pairs of
W-transforms appear as limit of pairs of asymptotic
transforms. Indeed, our theory comprises the theory of discrete
asymptotic nets and their W-transforms as limiting case. In
\autoref{sec:asymptotic-nets} we explicitly describe how to set up
this limiting process.

We denote the Grassmannian of lines in $\Pspace$ by $\Lspace$.  Via
the Klein map, a straight line is identified with a point on a quadric
in $\Pspace[5]$, the \emph{Plücker quadric.} Klein map and Plücker
quadric in real projective three-space are described in detail in
\cite[Section~2.1]{pottmann10:_line_geometry}. There is no essential
difference in projective spaces over commutative fields.

\begin{definition}
  A map $L\colon \ZSet^2 \to \Lspace$ is called a \emph{W-congruence}
  if its Klein image on the Plücker quadric is a conjugate net.
\end{definition}

W-congruences over real projective spaces have been introduced in
\cite{doliwa01:_a-nets_integrable_discrete_geometry} as discrete line
congruences that connect corresponding points of a discrete asymptotic
net and its W-transform. The necessity of our defining property is
also established their. Its sufficiency is not difficult to see and,
in fact, is implicitly used in later publications. A more elementary
characterization of W-congruences simply demands that the four lines
$A_i^j$, $A_{i+1}^j$, $A_{i+1}^{j+1}$, $A_i^{j+1}$ belong to a regulus
\cite[Section~103]{veblen16:_projective_geometry}, that is, they are
skew lines on a doubly ruled surface.

\subsection{W-congruences as axis congruence of asymptotic transforms}
\label{sec:W-congruence-from-asymptotic-transforms}

Our main result on asymptotic transforms is the discrete version of
\cite[p.~248]{jonas37:_laplacesche_zyklen}.

\begin{theorem}
  \label{th:9}
  If $f$ and $g$ are asymptotic transforms of each other, their common
  axis congruence is a W-congruence.
\end{theorem}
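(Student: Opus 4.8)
The plan is to establish the elementary characterization of a W-congruence head on: writing $A_i^j \coloneqq \Oplane{1} f_i^j \cap \Oplane{2} f_i^j$ for the axis line at $(i,j)$, I would show that for every $(i,j)$ the four lines $A_i^j$, $A_{i+1}^j$, $A_{i+1}^{j+1}$, $A_i^{j+1}$ lie on one regulus. The initial, purely bookkeeping, observation is that every osculating plane contains its own vertex, so both $\Oplane{1} f_i^j$ and $\Oplane{2} f_i^j$ pass through $f_i^j$ and, by the defining equalities $\Oplane{1} f = \Oplane{2} g$ and $\Oplane{2} f = \Oplane{1} g$, also through $g_i^j$; hence $A_i^j = f_i^j \vee g_i^j$. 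In particular the axis congruences of $f$ and of $g$ coincide, and each axis line carries one vertex of $f$ and one of $g$. I would also record that, in the regular case, $A_i^j \cap A_{i+1}^j$ is contained in $(f_i^j \vee f_{i+1}^j) \cap (g_i^j \vee g_{i+1}^j) = \emptyset$, and similarly for the other edge direction, so that the four axis lines of an elementary quadrilateral are pairwise skew.

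The heart of the matter is to produce, for each elementary quadrilateral, three common transversals of the four axis lines. I would use the four ``mixed diagonals''
\begin{align*}
  m_1 &= f_{i+1}^j \vee g_i^{j+1}, & m_2 &= f_i^{j+1} \vee g_{i+1}^j,\\
  m_3 &= f_i^j \vee g_{i+1}^{j+1}, & m_4 &= f_{i+1}^{j+1} \vee g_i^j,
\end{align*}
and show that each $m_\ell$ meets all four axis lines. For $m_1$, two of these incidences are immediate: $m_1$ shares the vertex $f_{i+1}^j$ with $A_{i+1}^j$ and the vertex $g_i^{j+1}$ with $A_i^{j+1}$. The remaining two are where the asymptotic-transform hypothesis enters, because the plane equalities collapse four \emph{a priori} unrelated points into a single osculating plane: $f_{i+1}^j$ and $g_i^{j+1}$ both lie in $\Oplane{1} f_i^j = \Oplane{2} g_i^j$, a plane that also contains $f_i^j$ and $g_i^j$, so $m_1$ and $A_i^j$ are coplanar and therefore meet; and $f_{i+1}^j$, $f_{i+1}^{j+1}$, $g_i^{j+1}$, $g_{i+1}^{j+1}$ all lie in $\Oplane{2} f_{i+1}^{j+1} = \Oplane{1} g_{i+1}^{j+1}$, so $m_1$ and $A_{i+1}^{j+1}$ are coplanar and meet. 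The corresponding statements for $m_2$, $m_3$, $m_4$ come out of the same computation after cyclically relabelling the quadrilateral and/or interchanging $f$ and $g$; in each case the two non-trivial incidences are routed through the osculating planes at the two corners of the quadrilateral that $m_\ell$ does \emph{not} touch. (Note that planarity of the elementary quadrilaterals of $f$ or $g$ is never used, only the two osculating-plane equalities.)

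It then remains to assemble these incidences into the regulus statement by the classical geometry of doubly ruled surfaces, see \cite[Section~103]{veblen16:_projective_geometry}. Since $A_i^j$, $A_{i+1}^j$, $A_i^{j+1}$ are pairwise skew, their common transversals form a regulus, and as $m_1$, $m_2$, $m_3$ are (distinct) such transversals, they are pairwise skew lines of that regulus. Conversely, each of the four axis lines meets all three of the pairwise skew lines $m_1$, $m_2$, $m_3$, hence lies on the unique doubly ruled quadric through $m_1$, $m_2$, $m_3$; that is, $A_i^j$, $A_{i+1}^j$, $A_{i+1}^{j+1}$, $A_i^{j+1}$ belong to a single regulus. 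Since this holds for every elementary quadrilateral, the axis congruence is a W-congruence in the sense of the elementary characterization.

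I expect the main obstacle to be the careful management of degeneracies and index bookkeeping rather than any isolated deep step: one must set up the mixed diagonals and the osculating planes so that all the required incidences --- sixteen in total, of which eight are trivial coincidences of vertices --- drop out of just the two plane equalities, and one must secure that consecutive axis lines really are skew and that $m_1$, $m_2$, $m_3$ are genuinely distinct, for otherwise the classical regulus argument has nothing to bite on. These genericity provisos should be covered by the standing regularity assumptions, but they warrant an explicit remark. A secondary, but routine, point is that the ``four skew lines on a doubly ruled surface'' characterization used here is equivalent to the conjugate-net-on-the-Plücker-quadric definition of a W-congruence, which is the standard correspondence already indicated above.
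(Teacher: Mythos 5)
Your proposal is correct and follows essentially the same route as the paper: both exhibit the four ``mixed diagonals'' $f_i^j \vee g_{i+1}^{j+1}$, $f_{i+1}^j \vee g_i^{j+1}$, $f_{i+1}^{j+1} \vee g_i^j$, $f_i^{j+1} \vee g_{i+1}^j$ as common transversals of the four axis lines, derive the needed incidences from the two osculating-plane equalities, and conclude via the classical regulus characterization. Your write-up merely spells out the skewness and non-degeneracy provisos that the paper leaves to its standing regularity assumptions.
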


\begin{proof}
  Denote the lines of the common axis congruence of $f$ and $g$ by
  $A_i^j = f_i^j \vee g_i^j$. Because of $\Oplane{1} f_{i+1}^j =
  \Oplane{2} g_{i+1}^j$ and $\Oplane{2} f_i^{j+1} = \Oplane{1}
  g_i^{j+1}$ the straight line $f_i^j \vee g_{i+1}^{j+1}$ intersects
  the four lines
  \begin{equation}
    \label{eq:1}
    A_i^j,\
    A_{i+1}^j,\
    A_{i+1}^{j+1},
    \quad\text{and}\quad
    A_i^{j+1}.
  \end{equation}
  Similar reasoning shows that the same is true for the lines
  $f_{i+1}^j \vee g_i^{j+1}$, $f_{i+1}^{j+1} \vee g_i^j$, and
  $f_i^{j+1} \vee g_{i+1}^j$. Thus, the four lines \eqref{eq:1} admit
  four transversal lines. This is only possible, if they are skew
  generators on a hyperboloid.
\end{proof}

The proof of Theorem~\ref{th:9} allows a different interpretation. The
point $g_{i+1}^{j+1}$ is the projection of $f_i^j$ onto $A_{i+1,j+1}$
from the center $A_{i+1}^j$ and, at the same time, from the center
$A_i^{j+1}$. The W-congruence property guarantees that both
projections always yield the same result. This projective relation
between the ranges of points $f_i^j \in A_i^j$ and $g_{i+1}^{j+1} \in
A_{i+1}^{j+1}$ is important enough to introduce a new notation.

\begin{definition}
  Given three pairwise skew lines $A$, $B$, $C$ we denote by
  $\projection{C}{A}{B}$ the projection of $A$ onto $B$ from the
  center $C$. More precisely, $\projection{C}{A}{B}$ maps the point $a
  \in A$ to the point $(a \vee C) \cap B$.
\end{definition}

With this notation, we have for example
\begin{equation*}
  \projection{A_i^j}{A_{i-1}^j}{A_i^{j+1}} =
  \projection{A_{i-1}^{j+1}}{A_{i-1}^j}{A_i^{j+1}}
  \quad\text{or}\quad
  \projection{A_i^j}{A_i^{j+1}}{A_{i+1}^j} =
  \projection{A_{i+1}^{j+1}}{A_i^{j+1}}{A_{i+1}^j}.
\end{equation*}
When range and image line stem from the same line congruence, we make
this notation a little more readable by writing only the upper and
lower index:
\begin{equation}
  \label{eq:2}
  \projection{A_i^j}{i-1,j}{i,j+1} =
  \projection{A_{i-1}^{j+1}}{i-1,j}{i,j+1}
  \quad\text{or}\quad
  \projection{A_i^j}{i,j+1}{i+1,j} =
  \projection{A_{i+1}^{j+1}}{i,j+1}{i+1,j}.
\end{equation}

A diagram of diverse projections between the lines of the congruence
$A$ is depicted in \autoref{fig:diagram}. Equation~\eqref{eq:2}
justifies the drawing of diagonal arrows without indicating the
projection center. For example, the arrow between $A_i^j$ and
$A_{i+1}^{j+1}$ denotes projection between these two lines from either
$A_{i+1}^j$ or~$A_i^{j+1}$.

\begin{figure}
  \centering
  \includegraphics{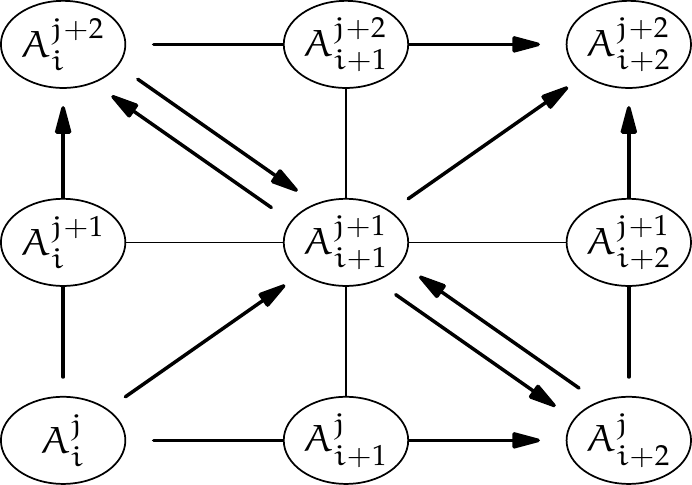}
  \caption{Diagram of projections}
  \label{fig:diagram}
\end{figure}

\subsection{Asymptotic transforms on a given W-congruence}
\label{sec:asymptotic-transforms-on-W-congruence}

As a natural next step we investigate the question whether it is
possible to construct a pair $f$, $g$ of asymptotically related nets
whose vertices lie on a given W-congruence $A$. This turns out to be
feasible in multiple ways. Essentially, we are allowed to choose the
values of $f$ (or $g$) on an elementary
quadrilateral. Theorem~\ref{th:13} below is a little more general. Its
proof requires an auxiliary result.

\begin{lemma}
  \label{lem:11}
  $\projection{A_i^j}{i-1,j}{i+1,j} =
  \bigl(\projection{A_i^j}{i,j+1}{i+1,j}\bigr) \circ
  \bigl(\projection{A_i^j}{i-1,j}{i,j+1}\bigr)$ and
  $\projection{A_i^j}{i,j-1}{i,j+1} =
  \bigl(\projection{A_i^j}{i+1,j}{i,j+1}\bigr) \circ
  \bigl(\projection{A_i^j}{i,j-1}{i+1,j}\bigr)$.
\end{lemma}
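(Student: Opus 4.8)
The plan is to reduce both identities to the single elementary observation that \emph{all} perspectivities occurring in them share one and the same centre, namely the line $A_i^j$. In the first identity the map $\projection{A_i^j}{i-1,j}{i,j+1}$ carries $A_{i-1}^j$ to $A_i^{j+1}$ from the centre $A_i^j$, the map $\projection{A_i^j}{i,j+1}{i+1,j}$ carries $A_i^{j+1}$ to $A_{i+1}^j$ from the same centre, and the direct map $\projection{A_i^j}{i-1,j}{i+1,j}$ carries $A_{i-1}^j$ to $A_{i+1}^j$, again from $A_i^j$; in the second identity the common centre is once more $A_i^j$, the intermediate line being $A_{i+1}^j$.

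First I would isolate the auxiliary fact: if $C$ is a line and $B_1$, $B_2$, $B_3$ are lines that are pairwise skew and each skew to $C$, then $\projection{C}{B_1}{B_3} = \bigl(\projection{C}{B_2}{B_3}\bigr)\circ\bigl(\projection{C}{B_1}{B_2}\bigr)$. This is transparent through the pencil of planes about $C$: since $B_m$ is skew to $C$ we have $C \vee B_m = \Pspace$, so every plane of the pencil meets $B_m$ in exactly one point and, conversely, every point of $B_m$ lies on a unique plane of the pencil; transporting all three lines to this pencil turns each of the three perspectivities into the identity of the pencil, whence the composition law. Without the pencil language the same argument reads: for $a \in B_1$ the plane $a \vee C$ already contains the intermediate point $b \coloneqq (a\vee C)\cap B_2$, hence $b \vee C = a \vee C$, and therefore $(b\vee C)\cap B_3 = (a\vee C)\cap B_3$, which is exactly the claimed equality evaluated at $a$.

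It then only remains to instantiate this with $C = A_i^j$ and $(B_1,B_2,B_3) = (A_{i-1}^j,\,A_i^{j+1},\,A_{i+1}^j)$ for the first identity, and $(B_1,B_2,B_3) = (A_i^{j-1},\,A_{i+1}^j,\,A_i^{j+1})$ for the second. The only place where the standing regularity hypotheses (in particular, that $A$ is a W-congruence) are actually used is the well-definedness of the maps, i.e.\ the skewness demanded by the preceding definition: the pairs $B_1B_2$, $B_2B_3$, $B_1C$ and $B_2C$ each lie on a single elementary quadrilateral of the congruence, where the four lines form a regulus of skew generators, so these pairs are skew automatically; skewness of the two outer lines $B_1$, $B_3$ --- that is, of $A_{i-1}^j$ with $A_{i+1}^j$, resp.\ of $A_i^{j-1}$ with $A_i^{j+1}$ --- is precisely the hypothesis one must presuppose in order to even write the direct projections $\projection{A_i^j}{i-1,j}{i+1,j}$ and $\projection{A_i^j}{i,j-1}{i,j+1}$. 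Consequently I expect no real obstacle: the whole content is the common-centre remark, and the rest is incidence bookkeeping about which elementary quads supply the needed skew pairs.
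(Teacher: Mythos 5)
Your proof is correct and is essentially the paper's own argument: both rest on the single observation that all three perspectivities share the centre $A_i^j$, so for $a\in B_1$ the plane $a\vee A_i^j$ contains the intermediate point and therefore cuts $B_3$ in the same point whether one goes directly or via $B_2$. The extra bookkeeping you supply about which skewness conditions come for free from the regulus property of elementary quadrilaterals is a harmless (and reasonable) addition that the paper leaves implicit.
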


\begin{proof}
  Consider a point $x \in A_i^j$.  It defines the plane $\xi = x \vee
  A_{i+1}^j$. Clearly, $y \coloneqq
  \projection{A_{i+1}^j}{i,j}{i+1,j+1}(x) = A_{i+1}^{j+1} \cap \xi$
  and thus also $\projection{A_{i+1}^j}{i+1,j+1}{i+2,j}(y) = A_{i+2}^j
  \cap \xi = \projection{A_{i+1}^j}{i,j}{i+2,j}(x)$. The second claim
  is obtained from the first by interchanging the two net directions.
\end{proof}

The interpretation of Lemma~\ref{lem:11} in the diagram of
\autoref{fig:diagram} is that the projection paths along the sides of
the rectangle can be decomposed into the paths from start point to
center and from center to end-point.

\begin{definition}
  We call an index pair $(i,j)$ \emph{even-even} if both $i$ and $j$
  are even. Similarly, we speak of \emph{odd-odd, even-odd, or
    odd-even} index pairs. We say that two index pairs are of the same
  \emph{parity} if both are even-even, even-odd, odd-even, or
  odd-odd. A \emph{black vertex} is even-even or odd-odd and a
  \emph{white vertex} is either even-odd or odd-even.
\end{definition}

\begin{theorem}
  \label{th:13}
  On a given W-congruence we can find a four-parametric set of pairs
  of asymptotic transforms. Every such pair $f$, $g$ is uniquely
  determined by the values of $f$ at four vertices of pairwise
  different parity.
\end{theorem}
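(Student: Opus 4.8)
The plan is to use the diagram of projections in \autoref{fig:diagram} as a combinatorial bookkeeping device: building an asymptotic transform pair on the W-congruence $A$ amounts to choosing, for each vertex $(i,j)$, a point $f_i^j \in A_i^j$ (and likewise $g_i^j \in A_i^j$) subject to the compatibility conditions that say consecutive vertices are linked by the canonical projections of \eqref{eq:2}. Concretely, the reinterpretation of the proof of Theorem~\ref{th:9} shows that if $f$, $g$ is an asymptotic transform pair with $f_i^j, g_i^j \in A_i^j$, then $g_{i+1}^{j+1}$ is the image of $f_i^j$ under the diagonal projection $A_i^j \to A_{i+1}^{j+1}$, and symmetrically $f_{i+1}^{j+1}$ is the diagonal projection of $g_i^j$, while $g_i^{j+1}$ is the diagonal projection of $f_{i+1}^j$ and $g_{i+1}^j$ the diagonal projection of $f_i^{j+1}$. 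Thus on any black vertex the value of $f$ determines the value of $g$ on the four neighbouring white vertices (via the four diagonal arrows emanating from it), and vice versa; conversely one checks that these diagonal relations, together with Lemma~\ref{lem:11}, are \emph{equivalent} to $f$ and $g$ being asymptotic transforms.

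First I would split the vertex set $\ZSet^2$ into black and white vertices. On the black sublattice, $f$ takes arbitrary values and $g$ is then forced; on the white sublattice the roles are reversed. So the data decouples into: (i) a choice of $f$ on black vertices, propagating to $g$ on white vertices by diagonal projections, and (ii) a choice of $g$ on black vertices, propagating to $f$ on white vertices. Within sublattice (i), I would show that prescribing $f$ at a single black vertex $(i_0,j_0)$ determines $f$ at \emph{all} black vertices: moving from $(i,j)$ to $(i+2,j)$ is the horizontal projection $\projection{A_i^j}{i-1,j}{i+1,j}$, which by Lemma~\ref{lem:11} factors through the diagonal arrows to and from the white vertex $(i+1,j)$ — i.e. it is exactly the composite ``push $f$ diagonally to $g$ at the white vertex, then push $g$ diagonally back to $f$''. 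Hence consistency of horizontal and vertical propagation across black vertices is precisely the statement of Lemma~\ref{lem:11}, so there is no monodromy obstruction and one free point (in $A_{i_0}^{j_0}$, a one-parameter choice) pins down all of $f$ on black vertices and all of $g$ on white vertices. The same argument with black and white interchanged, and with $f$ and $g$ interchanged, handles the remaining data. Counting: the black sublattice splits further into even-even and odd-odd components, and the white sublattice into even-odd and odd-even components; each of these four components is connected under the horizontal/vertical double-shifts and contributes one free parameter, namely the position of one vertex of that parity. This gives the four-parameter family, parametrised by the values of $f$ at one even-even, one odd-odd, one even-odd and one odd-even vertex — i.e. at four vertices of pairwise different parity — and the assignment is bijective.

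The main obstacle is verifying that the diagonal-projection relations really are equivalent to the defining property $\Oplane{1}f = \Oplane{2}g$, $\Oplane{2}f = \Oplane{1}g$, not merely necessary: one must recover, from $f$ and $g$ lying on $A$ and being diagonally related, that the osculating planes match up correctly. I would argue this locally at each vertex $(i,j)$: the plane $\Oplane{1}f_i^j = f_{i-1}^j \vee f_i^j \vee f_{i+1}^j$ contains the lines $A_{i-1}^j$, $A_i^j$, $A_{i+1}^j$? — no, rather it is spanned by the three points, and one shows using the diagonal relations and the regulus structure of the elementary quadrilaterals of $A$ that this plane equals $g_i^{j-1} \vee g_i^j \vee g_i^{j+1} = \Oplane{2}g_i^j$, because both planes are characterised as the unique plane through $A_i^j$ meeting the appropriate transversal of the regulus. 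A secondary nuisance is checking genuine independence of the four parameters and that no degenerate configurations force coincidences; this should follow from regularity of $A$ as a W-congruence, but it needs to be stated. Once the local equivalence is in hand, the global count is the straightforward propagation-plus-Lemma~\ref{lem:11} argument sketched above.
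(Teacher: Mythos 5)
Your overall strategy coincides with the paper's: propagate $f$ within each parity class by projections along the congruence, invoke Lemma~\ref{lem:11} to rule out contradictions, and count one parameter per parity class. However, two steps are wrong or incomplete as written. First, the propagation mechanism you describe does not exist: every diagonal arrow of \autoref{fig:diagram} joins $(i,j)$ to $(i\pm1,j\pm1)$ and therefore preserves the colour of the vertex (even-even goes to odd-odd, odd-even to even-odd). Consequently the value of $f$ at a black vertex determines $g$ at the four diagonally adjacent \emph{black} vertices, and the asserted decoupling ``a choice of $f$ on black vertices, propagating to $g$ on white vertices by diagonal projections'' is false. The correct pairing is $f$ on even-even $\leftrightarrow$ $g$ on odd-odd, $f$ on odd-odd $\leftrightarrow$ $g$ on even-even, and similarly within the white sublattice; your closing count of four parameters, one per parity class, is correct, but it is not delivered by the mechanism you set up.

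Second, the absence of monodromy is not ``precisely the statement of Lemma~\ref{lem:11}''. That lemma is valid for arbitrary pairwise skew lines and makes no use of the regulus property, so by itself it cannot force the two routes to $f_{i+2}^{j+2}$ in \eqref{eq:3} to agree. Closure requires combining Lemma~\ref{lem:11} with the identities \eqref{eq:2}, i.e.\ the equality of the diagonal projections taken from the two different centres $A_{i+1}^j$ and $A_i^{j+1}$ --- and \eqref{eq:2} is exactly where the hypothesis that $A$ is a W-congruence enters (see the computation \eqref{eq:4}--\eqref{eq:5} in the paper). Your argument never actually uses that hypothesis, so it cannot be complete. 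The remaining issue you flag, recovering $\Oplane{1}f=\Oplane{2}g$ and $\Oplane{2}f=\Oplane{1}g$ from the projection relations, is resolvable along the lines you indicate: once $f_{i-1}^j$, $f_{i+1}^j$ and $A_i^j$ are coplanar, the plane $f_{i-1}^j\vee A_i^j=\Oplane{1}f_i^j$ contains $g_i^{j-1}$, $g_i^j$ and $g_i^{j+1}$ by construction, whence $\Oplane{1}f_i^j=\Oplane{2}g_i^j$ barring degeneracies.
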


\begin{proof}
  Assume that the vertex $f_i^j$ is given. Clearly, we have
  \begin{equation*}
    \begin{aligned}
      f_{i+2}^j &= \projection{A_{i+1}^j}{i,j}{i+2,j}(f_i^j), &
      f_{i-2}^j &= \projection{A_{i-1}^j}{i,j}{i-2,j}(f_i^j),\\
      f_i^{j+2} &= \projection{A_i^{j+1}}{i,j}{i,j+2}(f_i^j), &
      f_i^{j-2} &= \projection{A_i^{j-1}}{i,j}{i,j-2}(f_i^j).
    \end{aligned}
  \end{equation*}
  Proceeding in like manner, we can construct the values of $f$ on all
  vertices of the same parity as $(i,j)$. Thus, if there exists a net
  $f$ with axis congruence $A$, it is uniquely determined by the
  initial data. Since the net $f$ also determines all osculating
  planes $\Oplane{1} f_i^j$, $\Oplane{2} f_i^j$, the net $g$ is
  determined as well, for example $g_{i+1}^j = A_{i+1}^j \cap
  \Oplane{1}f_i^j$. Because of $\projection{A_{i+1}^j}{i,j}{i+1,j+1} =
  \projection{A_i^{j+1}}{i,j}{i+1,j+1}$, the construction of $g$ is
  not ambiguous. Moreover, $f$ and $g$ are asymptotic transforms with
  axis congruence~$A$.

  We still have to show existence of $f$. This is necessary because
  our construction of the vertices of $f$ might produce a
  contradiction. The prototype case of this is the construction of
  $f_{i+2,j+2}$ in two ways according to
  \begin{equation}
    \label{eq:3}
    f_i^j \xrightarrow{\projection{A_{i+1}^j}{i,j}{i+2,j}} f_{i+2}^j \xrightarrow{\projection{A_{i+2}^{j+1}}{i+2,j}{i+2,j+2}} f_{i+2}^{j+2},
    \quad
    f_i^j \xrightarrow{\projection{A_i^{j+1}}{i,j}{i,j+2}} f_i^{j+2} \xrightarrow{\projection{A_{i+1}^{j+2}}{i,j+2}{i+2,j+2}} f_{i+2}^{j+2}.
  \end{equation}
  If we can show that both routes in \eqref{eq:3} yield the same
  value, the validity of our construction is guaranteed. In other
  words, we have to show
  \begin{equation*}
    \bigl(\projection{A_{i+2}^{j+1}}{i+2,j}{i+2,j+2}\bigr) \circ \bigl(\projection{A_{i+1}^j}{i,j}{i+2,j}\bigr) =
    \bigl(\projection{A_{i+1}^{j+2}}{i,j+2}{i+2,j+2}\bigr) \circ \bigl(\projection{A_i^{j+1}}{i,j}{i,j+2}\bigr).
  \end{equation*}
  A glance at \autoref{fig:diagram} immediately confirms this: The
  projection paths along two adjacent sides of the rectangle equals
  the diagonal projection path. A formal argument uses
  Lemma~\ref{lem:11} and the identities \eqref{eq:2}. We have
  \begin{equation}
    \label{eq:4}
    \begin{aligned}
      & \bigl(\projection{A_{i+2}^{j+1}}{i+2,j}{i+2,j+2}\bigr)
      \circ
      \bigl(\projection{A_{i+1}^j}{i,j}{i+2,j}\bigr) \\
      ={}&
      \bigl(\projection{A_{i+2}^{j+1}}{i+1,j+1}{i+2,j+2}\bigr) \circ
      \bigl(\projection{A_{i+2}^{j+1}}{i+2,j}{i+1,j+1}\bigr) \circ
      \bigl(\projection{A_{i+1}^j}{i+1,j+1}{i+2,j}\bigr) \circ
      \bigl(\projection{A_{i+1}^j}{i,j}{i+1,j+1}\bigr) \\
      ={}&
      \bigl(\projection{A_{i+2}^{j+1}}{i+1,j+1}{i+2,j+2}\bigr) \circ
      \underbrace{
          \bigl(\projection{A_{i+1}^j}{i+2,j}{i+1,j+1}\bigr) \circ
          \bigl(\projection{A_{i+1}^j}{i+1,j+1}{i+2,j}\bigr)}_{\text{identity on $A_{i+1}^{j+1}$}}
      \circ
      \bigl(\projection{A_{i+1}^j}{i,j}{i+1,j+1}\bigr) \\
      ={}&
      \bigl(\projection{A_{i+2}^{j+1}}{i+1,j+1}{i+2,j+2}\bigr) \circ
      \bigl(\projection{A_{i+1}^j}{i,j}{i+1,j+1}\bigr).
    \end{aligned}
  \end{equation}
  By interchanging the two net directions, we arrive at
  \begin{equation}
    \label{eq:5}
    \bigl(\projection{A_{i+1}^{j+2}}{i,j+2}{i+2,j+2}\bigr)
    \circ
    \bigl(\projection{A_i^{j+1}}{i,j}{i,j+2}\bigr) 
    =
    \bigl(\projection{A_{i+1}^{j+2}}{i+1,j+1}{i+2,j+2}\bigr)
    \circ
    \bigl(\projection{A_i^{j+1}}{i,j}{i+1,j+1}\bigr).
  \end{equation}
  Equation~\eqref{eq:2} shows that the expressions after the last
  equal sign in \eqref{eq:4} and \eqref{eq:5} are equal.
\end{proof}

It is interesting to compare Theorem~\ref{th:13} with the results of
\citep{jonas37:_laplacesche_zyklen}. While Theorem~\ref{th:13} shows
existence of a four-parametric set of asymptotically related nets on
the discrete W-congruence $A$, there only exists a one-parametric set
of such surfaces on a smooth W-congruence.

\subsection{Asymptotic nets as limiting case}
\label{sec:asymptotic-nets}

We already mentioned that asymptotic nets related by a W-transform can
be seen as limiting case of pairs of asymptotically related
nets. Here, we provide more details on this remark.

Consider a W-congruence $A$. By Theorem~\ref{th:13}, we can construct
pairs $f$, $g$ of asymptotic transforms on $A$ by prescribing suitable
values of $f$ on an elementary quadrilateral. The value of $f$ on an
even-even vertex determines the values of $\Oplane{1}f = \Oplane{2}g$
on all odd-even vertices, the values of $\Oplane{2}f = \Oplane{1}g$ on
all even-odd vertices, and the values of $g$ on all odd-odd vertices.
Alternatively, we can also prescribe the values of $\Oplane{1}f$ and
$\Oplane{2}f$ on a black and a white vertex. If we make the additional
requirement $\Oplane{1}f = \Oplane{2}f =\Oplane{1}g = \Oplane{2} g$, a
similar construction is possible but only one plane on a black and one
plane on a white vertex can be chosen. The resulting nets $f$ and $g$
will then form a pair of asymptotic nets, related by a W-transform.

Consider now four sequences
\begin{equation}
  \label{eq:6}
  (\Oplane{1} f_0^0)_n,\quad
  (\Oplane{2} f_0^0)_n,\quad
  (\Oplane{1} f_1^0)_n,\quad
  (\Oplane{2} f_1^0)_n
\end{equation}
of planes such that for each $n$
\begin{equation*}
  A_0^0 \subset (\Oplane{1}f_0^0)_n,\quad
  A_0^0 \subset (\Oplane{2}f_0^0)_n,\quad
  A_1^0 \subset (\Oplane{1}f_1^0)_n,\quad
  A_1^0 \subset (\Oplane{2}f_1^0)_n,
\end{equation*}
and
\begin{equation*}
  \lim_{n\to\infty} (\Oplane{1} f_0^0)_n = \lim_{n\to\infty} (\Oplane{2} f_0^0)_n,
  \quad
  \lim_{n\to\infty} (\Oplane{1} f_1^0)_n = \lim_{n\to\infty} (\Oplane{2} f_1^0)_n.
\end{equation*}
For each $n$, the planes in \eqref{eq:6} define asymptotically related
nets $f_n$ and $g_n$ and the point-wise limits $f =
\lim_{n\to\infty}f_n$ and $g=\lim_{n\to\infty}g_n$ exist. By
construction, the nets $f$ and $g$ are asymptotic nets related by a
W-transform.

\subsection{Two further results}
\label{sec:further-results}

We conclude this section with two simple but curious observations on
the cross-ratio of asymptotic transforms on the same W-congruence and
on their local configuration.

\begin{corollary}
  \label{cor:14}
  Consider two pairs $f$, $g$ and $f'$, $g'$ of asymptotically related
  conjugate nets on the same W-congruence $A$. Then the cross-ratio
  \begin{equation}
    \label{eq:7}
    \CR{f_{i,j}}{f'_{i,j}}{g_{i,j}}{g'_{i,j}}
  \end{equation}
  is constant on all black and on all white vertices, respectively.
\end{corollary}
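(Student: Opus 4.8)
The plan is to exploit the projective nature of the construction in Theorem~\ref{th:13}. Recall from the proof of Theorem~\ref{th:9} that, once we know $f_i^j$, the opposite vertex $g_i^j$ is determined by $g_i^j = A_i^j \cap \Oplane{1}f_{i-1}^j$, and that each coordinate shift of $f$ along the net is realized by a projectivity between lines of the congruence $A$ of the form $\projection{\cdot}{\cdot}{\cdot}$. Since these maps are projectivities between one-dimensional ranges, the four points $f_i^j$, $f'_{i,j}$, $g_i^j$, $g'_{i,j}$ on the line $A_i^j$ move to the corresponding four points on a neighboring line $A_{i'}^{j'}$ of the same parity by a single projectivity, which preserves cross-ratios. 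Hence \eqref{eq:7} is constant on any one parity class of vertices; the substance of the claim is that the two parity classes inside the black vertices (even-even and odd-odd) give the same value, and likewise for the two white classes.

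First I would fix a black vertex, say $(i,j)$ even-even, and express the four relevant points on $A_i^j$ entirely in terms of data on an adjacent white vertex. The key observation is that $f_i^j$ and $g_i^j$ can be read off from the osculating planes at, say, $(i-1,j)$: concretely $f_i^j = A_i^j \cap \Oplane{1} f_{i-1}^j$ is false in general — rather $f_i^j$ lies on $\Oplane{1}f_{i-1}^j$, while $g_i^j = A_i^j \cap \Oplane{1}f_{i-1}^j$. So on the line $A_i^j$ the pair $(f_i^j, g_i^j)$ is obtained by intersecting $A_i^j$ with the pair of planes $(\Oplane{2}f_{i-1}^j, \Oplane{1}f_{i-1}^j)$ through the line $A_{i-1}^j$ — equivalently, $(f_i^j,g_i^j)$ is the image of $(f_{i-1}^j, g_{i-1}^j)$ under a projectivity $\projection{\cdot}{i-1,j}{i,j}$ whose center is chosen appropriately (one checks, using $\Oplane{1}f_{i-1}^j = \Oplane{2}g_{i-1}^j$, that this is well-defined). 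Running this for both pairs $f,g$ and $f',g'$ simultaneously, the cross-ratio \eqref{eq:7} at $(i,j)$ equals the cross-ratio of the corresponding four points at $(i-1,j)$. Iterating across a single step changes the parity, and two steps return to the original parity class; so the cross-ratio is already constant on a set of vertices that meets every parity class within the black vertices, and similarly within the white ones.

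The main obstacle — and the step I would spend the most care on — is verifying that the passage from a vertex to an adjacent vertex of the opposite parity really is a projectivity of $A_i^j$ that carries all four points $f, f', g, g'$ coherently, i.e.\ that it is a single map independent of which of the two nets we track. This amounts to checking that the projection centers used for $f$ and for $g$ can be taken to coincide, which follows from the identity $\Oplane{1}f_{i-1}^j = \Oplane{2}g_{i-1}^j$ together with the ambiguity-freeness already established in the proof of Theorem~\ref{th:13} (the equality of the two projection centers $A_{i+1}^j$ and $A_i^{j+1}$ in \eqref{eq:2}). Once this is in place, cross-ratio invariance under projectivities, applied twice to return to the starting parity, finishes the argument. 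I would then add one sentence noting that a black and a white vertex are genuinely not linked by any such chain — the axis congruence only constrains same-parity data in this way — which is exactly why the corollary asserts constancy separately on black and on white vertices rather than globally.

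\begin{proof}
  Fix two pairs $f,g$ and $f',g'$ of asymptotic transforms on $A$, and
  consider the four points $f_i^j, f'_{i,j}, g_i^j, g'_{i,j}$ on the
  line $A_i^j$. We first pass from the vertex $(i,j)$ to the adjacent
  vertex $(i-1,j)$. Since $\Oplane{1}f_{i-1}^j = \Oplane{2}g_{i-1}^j$
  contains both $A_{i-1}^j$ and $A_i^j$ and also passes through
  $f_i^j$ and $g_{i-1}^j$, the point $g_i^j = A_i^j \cap
  \Oplane{1}f_{i-1}^j$ is the image of $g_{i-1}^j$ under the
  projectivity $\projection{A_{i-1}^{j+1}}{i-1,j}{i,j}$ (equivalently,
  from the center $A_i^{j-1}$; these two centers agree by
  \eqref{eq:2}). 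By the analogous argument applied in the second net
  direction, or directly from $\Oplane{2}f_{i-1}^j =
  \Oplane{1}g_{i-1}^j$, the same projectivity sends $f_{i-1}^j$ to
  $f_i^j$. Hence the single projectivity
  $\pi \coloneqq \projection{A_{i-1}^{j+1}}{i-1,j}{i,j}\colon A_{i-1}^j
  \to A_i^j$ carries the quadruple $(f_{i-1}^j, f'_{i-1,j},
  g_{i-1}^j, g'_{i-1,j})$ to $(f_i^j, f'_{i,j}, g_i^j, g'_{i,j})$;
  the same $\pi$ works for the primed pair because its definition does
  not involve $f$ or $f'$ at all. Since projectivities preserve
  cross-ratios,
  \begin{equation*}
    \CR{f_{i,j}}{f'_{i,j}}{g_{i,j}}{g'_{i,j}} =
    \CR{f_{i-1,j}}{f'_{i-1,j}}{g_{i-1,j}}{g'_{i-1,j}}.
  \end{equation*}
  The vertices $(i,j)$ and $(i-1,j)$ have different parity, and
  applying the step once more (in either net direction) returns to a
  vertex of the original parity. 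Chaining such steps connects any two
  vertices of the same parity, so \eqref{eq:7} is constant on each
  parity class. As a black vertex is connected in this way to every
  even-even and every odd-odd vertex, and a white vertex to every
  even-odd and every odd-even vertex, the cross-ratio \eqref{eq:7} is
  constant on all black vertices and constant on all white vertices.
  (These two constants need not coincide, since no chain of the above
  projectivities links a black vertex to a white one.)
\end{proof}
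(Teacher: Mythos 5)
There is a genuine gap here: the one-step projectivity you construct between the horizontally adjacent axes $A_{i-1}^j$ and $A_i^j$ does not exist, and the incidences you invoke to justify it are false. The plane $\Oplane{1}f_{i-1}^j = f_{i-2}^j \vee f_{i-1}^j \vee f_i^j$ cannot contain both $A_{i-1}^j$ and $A_i^j$, since these are skew lines of a regulus and no plane contains two skew lines; hence $A_i^j \cap \Oplane{1}f_{i-1}^j$ is the single point $f_i^j$ (which visibly lies in that plane), not $g_i^j$ as you assert. Likewise, $\projection{A_{i-1}^{j+1}}{i-1,j}{i,j}$ sends $g_{i-1}^j$ to the point where the transversal $g_{i-1}^j \vee f_i^{j+1}$ meets $A_i^j$ (because $g_{i-1}^j \vee A_{i-1}^{j+1} = \Oplane{2}g_{i-1}^{j+1} = \Oplane{1}f_{i-1}^{j+1} \ni f_i^{j+1}$), which is in general neither $f_i^j$ nor $g_i^j$. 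The projections that actually respect the nets are of two kinds: the shift by two in one net direction, $\projection{A_{i+1}^j}{i,j}{i+2,j}$, carries $f_i^j \mapsto f_{i+2}^j$ and $g_i^j \mapsto g_{i+2}^j$ simultaneously, which gives constancy of \eqref{eq:7} on each of the four parity classes; and the diagonal shift $\projection{A_{i+1}^j}{i,j}{i+1,j+1}$ carries $f_i^j \mapsto g_{i+1}^{j+1}$ and $g_i^j \mapsto f_{i+1}^{j+1}$, i.e.\ it \emph{swaps} the roles of $f$ and $g$. The swap is harmless because $\CR{a}{b}{c}{d} = \CR{c}{d}{a}{b}$, and it is precisely what links even-even to odd-odd vertices (and even-odd to odd-even), which is the whole content of the corollary.

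Note also that your argument would prove too much: a step from $(i,j)$ to $(i-1,j)$ joins a black vertex to a white one, so chaining it would make the cross-ratio constant on \emph{all} vertices --- contradicting your own closing parenthesis and, more importantly, Theorem~\ref{th:13}, by which the values of $f$ (and $f'$) at four vertices of pairwise different parity are free parameters, so the quadruples on a black axis and on a white axis can be prescribed independently and the two constants are genuinely unrelated. That the conclusion of your single-step claim is false is the clearest indication that such a projectivity cannot exist. The repair is the route the paper takes: projections within a parity class preserve the ordered quadruple, the diagonal projection exchanges $f$ with $g$, and the symmetry of the cross-ratio under exchanging the first pair with the second absorbs that exchange.
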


\begin{proof}
  Because vertices of the same parity are obtained from the vertices
  on $A_{i,j}$ by a series of projections, they give rise to the same
  cross-ratio \eqref{eq:7}.  But also the vertices $g_{i+1,j+1}$ and
  $g'_{i+1,j+1}$ are the projections of $f_{i,j}$ and $f'_{i,j}$ from
  the center $A_{i+1,j}$ (or from the center $A_{i,j+1}$). Thus,
  \begin{equation*}
    \begin{aligned}
    \CR{f_{i,j}}{f'_{i,j}}{g_{i,j}}{g'_{i,j}} &=
    \CR{g_{i+1,j+1}}{g'_{i+1,j+1}}{f_{i+1,j+1}}{f'_{i+1,j+1}} \\
    & = \CR{f_{i+1,j+1}}{f'_{i+1,j+1}}{g_{i+1,j+1}}{g'_{i+1,j+1}}.
    \end{aligned}
  \end{equation*}
  This is precisely what had to be shown.
\end{proof}

In \cite{jonas37:_laplacesche_zyklen}, H.~Jonas proved that for each
line of a smooth W-congruence $A$, the vertices of two asymptotic
transforms $f$ and $g$ on $A$ are harmonic with respect to the two
asymptotically parametrized focal nets of $A$. In view of
Theorem~\ref{th:13} and \autoref{sec:asymptotic-nets}, this theorem
cannot have a discrete version and Corollary~\ref{cor:14} is the
closest result we can get.

Our next result studies the local geometry of an elementary
quadrilateral in asymptotically related nets. Recall that two
quadruples of points $(a_0,a_1,a_2,a_3)$ and $(b_0,b_1,b_2,b_3)$ form
a pair of Möbius tetrahedra if
\begin{equation}
  \label{eq:8}
  a_i \in b_j \vee b_k \vee b_l
  \quad\text{and}\quad
  b_i \in a_j \vee a_k \vee a_l
\end{equation}
for any choice of pairwise different $i$, $j$, $k$, $l \in
\{1,2,3,4\}$. In other words, the vertices of one tetrahedron lie in
the face planes of the other.

\begin{proposition}
  \label{prop:15}
  Assume that $f$ and $g$ is a pair of asymptotically related
  nets. Then the two quadruples
  \begin{equation*}
    (a_0,a_1,a_2,a_3) \coloneqq (f_{0,0}, f_{1,0}, g_{0,1}, g_{1,1})
    \quad\text{and}\quad
    (b_0,b_1,b_2,b_3) \coloneqq (f_{1,1}, f_{0,1}, g_{1,0}, g_{0,0})
  \end{equation*}
  form the vertices of a pair of Möbius tetrahedra.
\end{proposition}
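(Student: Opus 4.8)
The plan is to verify the twelve incidences of \eqref{eq:8} by exploiting the two defining identities of an asymptotic transform, $\Oplane{1}f = \Oplane{2}g$ and $\Oplane{2}f = \Oplane{1}g$, together with the planarity of the elementary quadrilaterals of the conjugate nets $f$ and $g$. Write $a_0 = f_0^0$, $a_1 = f_1^0$, $a_2 = g_0^1$, $a_3 = g_1^1$ and $b_0 = f_1^1$, $b_1 = f_0^1$, $b_2 = g_1^0$, $b_3 = g_0^0$. The face planes of the first tetrahedron are $a_1\vee a_2\vee a_3$, $a_0\vee a_2\vee a_3$, $a_0\vee a_1\vee a_3$, $a_0\vee a_1\vee a_2$, and one must show each of $b_0,b_1,b_2,b_3$ lies in the correspondingly-indexed face, and symmetrically with the roles of $a$'s and $b$'s exchanged.

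First I would identify the four face planes of each tetrahedron with osculating planes. For instance, $a_0 = f_0^0$ and $a_1 = f_1^0$ both lie on the first osculating plane $\Oplane{1}f_*^0$ of $f$ along the row $j=0$ — more precisely, recalling $\Oplane{1}f_i^j = f_{i-1}^j\vee f_i^j\vee f_{i+1}^j$ and the Laplace-cycle situation, the point $g_0^1$ lies on $\Oplane{2}g_0^* = \Oplane{1}f_0^*$; one checks that the plane $a_0\vee a_1\vee a_2 = f_0^0\vee f_1^0\vee g_0^1$ is exactly one of the osculating planes attached to a vertex of the configuration. The key mechanism is: the line $f_0^0\vee f_1^0$ lies in a first-direction osculating plane of $f$, which coincides with a second-direction osculating plane of $g$; that plane also contains the appropriate $g$-vertices (since consecutive $g$-points in the second direction are coplanar with the relevant osculating plane), so the face plane picks up a third, $g$-type, vertex, and dually. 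Carrying this out for all four faces of each tetrahedron and matching indices is the bulk of the work.

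The main obstacle I expect is bookkeeping: getting the index pattern of \eqref{eq:8} to line up correctly, i.e. making sure that $b_i$ lands in the face \emph{opposite} $a_i$ and not in some other face, across all four indices and both tetrahedra. A clean way to organize this is to observe the built-in symmetry of the two quadruples — the map swapping $f\leftrightarrow g$ and reflecting the index square $(i,j)\mapsto(1-i,1-j)$ interchanges $(a_0,a_1,a_2,a_3)$ with $(b_3,b_2,b_1,b_0)$ (up to the reordering that realizes it), so it suffices to verify, say, that $b_0\in a_1\vee a_2\vee a_3$ and $b_3\in a_0\vee a_1\vee a_2$, plus one mixed incidence, and then invoke the symmetry to get the rest. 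Each individual incidence then reduces to: a named plane equals an osculating plane $\Oplane{1}$ or $\Oplane{2}$ of $f$ or $g$ at a suitable vertex, that plane contains the required point by planarity of an elementary quadrilateral or by the asymptotic-transform identity, and we are done. I would present the argument as a short table of the four face planes of each tetrahedron expressed as osculating planes, followed by the one-line verification for each of the eight membership relations (four per tetrahedron).
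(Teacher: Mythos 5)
Your proposal is correct and takes essentially the same approach as the paper: each incidence of \eqref{eq:8} is read off from one of the osculating-plane identities $\Oplane{1}f_i^j=\Oplane{2}g_i^j$ or $\Oplane{2}f_i^j=\Oplane{1}g_i^j$ at the four vertices of the elementary square (e.g.\ $a_0\in b_1\vee b_2\vee b_3$ from $\Oplane{2}f_0^0=\Oplane{1}g_0^0$). Two small corrections: there are eight, not twelve, conditions to check, and planarity of elementary quadrilaterals is never needed --- the proposition concerns arbitrary (not necessarily conjugate) asymptotically related nets, so only the osculating-plane identities enter.
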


\begin{proof}
  The proof is simply a matter of comparing the Möbius conditions
  \eqref{eq:8} with the conditions on the asymptotic transform. For
  example $a_0 \in b_1 \vee b_2 \vee b_3$ is true because of
  $\Oplane{2} f_{0,0} = \Oplane{1} g_{0,0}$.
\end{proof}

Proposition~\ref{prop:15} yields a different possibility for proving
Theorem~\ref{th:9} by means of
\cite[Equation~(1.0)]{edge36:_quadrics_moebius_tetrads} and a
straightforward calculation. The classic result of Möbius in
\cite{moebius28:_moebius_tetrahedra} states that seven of the eight
incidence conditions \eqref{eq:8} imply the eighth. This means that
the conditions on the asymptotic relation between two discrete nets
$f$ and $g$ are not independent either.

\section{Periodic Laplace cycles}
\label{sec:periodic-laplace-cycles}

In this section we specialize the results of
\autoref{sec:asymptotic-transforms} to opposite nets $f$, $g$ (or $h$,
$k$) in a Laplace cycle $f$, $h$, $g$, $k$ of period four.

\subsection{General results}
\label{sec:general-results}

Call the set of connecting lines of corresponding points of one pair
of opposite nets a \emph{diagonal congruence} of the cycle. As an
immediate consequence of Theorem~\ref{th:9} and the fact that opposite
nets in a Laplace cycle of period four are asymptotically related we
have

\begin{corollary}
  The two diagonal congruences of a discrete Laplace cycle of period
  four are W-congruences.
\end{corollary}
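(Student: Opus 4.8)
The plan is to derive the corollary as a direct combination of two facts already established in the excerpt. By Proposition~\ref{prop:7}, the opposite nets $f$, $g$ (respectively $h$, $k$) of a discrete Laplace cycle of period four are asymptotic transforms of each other, and their common axis congruence is precisely the diagonal congruence whose lines are $K_i^j = f_i^j \vee g_i^j$ (respectively $L_i^j = h_i^j \vee k_i^j$). Theorem~\ref{th:9} then says that the common axis congruence of a pair of asymptotic transforms is a W-congruence. Applying Theorem~\ref{th:9} once to the pair $f$, $g$ and once to the pair $h$, $k$ yields that both diagonal congruences are W-congruences, which is the assertion.

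Concretely, the proof I would write is essentially two sentences. First, invoke Proposition~\ref{prop:7} to record that each pair of opposite nets is asymptotically related with axis congruence equal to the corresponding diagonal congruence. Second, apply Theorem~\ref{th:9} to each of these two pairs to conclude that each diagonal congruence is a W-congruence. One small bookkeeping point worth making explicit is that the definition of ``diagonal congruence'' given just above the corollary (the connecting lines of corresponding points of a pair of opposite nets) coincides with the definition of ``axis congruence'' of either net in the pair — this is exactly what Proposition~\ref{prop:7} asserts when it says opposite nets have the \emph{same} axis congruence, namely $\Oplane{1}f_i^j \cap \Oplane{2}f_i^j = f_i^j \vee g_i^j$. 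Once that identification is noted, nothing further is needed.

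There is essentially no obstacle here: the corollary is a routine specialization, and all the real work was done in Theorem~\ref{th:9}. The only thing to be mildly careful about is the standing nondegeneracy hypotheses — that corresponding points are distinct ($f_i^j \neq g_i^j$, $h_i^j \neq k_i^j$) and that the two osculating planes at each vertex differ — since these are what make the axis/diagonal congruences well-defined in the first place; these are precisely the assumptions made in the paragraph opening \autoref{sec:asymptotic-transforms}, so they are in force. With those in hand the corollary follows immediately, and I would phrase the proof as nothing more than ``Combine Proposition~\ref{prop:7} with Theorem~\ref{th:9}, applied separately to the opposite pairs $f$, $g$ and $h$, $k$.''
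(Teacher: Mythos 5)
Your proposal is correct and matches the paper exactly: the paper presents this corollary as an immediate consequence of Theorem~\ref{th:9} together with Proposition~\ref{prop:7} (opposite nets are asymptotically related and their common axis congruence is the diagonal congruence), which is precisely the two-step argument you give. Nothing further is needed.
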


A given W-congruence $A$ is, in general, not the diagonal congruence
of a Laplace cycle of period four. In other words, it is impossible to
find conjugate nets $f$ and $g$ such that $f_i^j$, $g_i^j \in A_i^j$
for all indices $(i,j)$. But the observation at the beginning of
\cite[Section~3]{jonas37:_laplacesche_zyklen} also holds in the
discrete setting:

\begin{theorem}
  \label{th:17}
  If a W-congruence $A$ appears as the axis congruence of a discrete
  conjugate net $f$, the net $f$ gives rise to a Laplace cycle of
  period four.
\end{theorem}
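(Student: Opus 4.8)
The plan is to build the net that ought to be opposite to $f$ in the cycle as the asymptotic transform of $f$ supported on the given W-congruence, and then to recognise it simultaneously as $\Laplace{11}f$ and as $\Laplace{22}f$ by a short incidence computation.

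First I would invoke Theorem~\ref{th:13}. Since $A_i^j=\Oplane{1}f_i^j\cap\Oplane{2}f_i^j$ passes through $f_i^j$, the values of $f$ at four vertices of pairwise different parity are admissible initial data for the construction on the W-congruence $A$, and that theorem then produces a pair $\hat f$, $g$ of asymptotic transforms on $A$ whose first net $\hat f$ takes the prescribed values at those four vertices. I claim $\hat f=f$. Indeed, a regular conjugate net with axis congruence $A$ automatically satisfies the two-step projection identities driving that construction: the line $A_{i+1}^j$ lies in $\Oplane{1}f_{i+1}^j$ and passes through $f_{i+1}^j$, while $f_i^j$ lies in the same plane off that line, so $f_i^j\vee A_{i+1}^j=\Oplane{1}f_{i+1}^j$ and hence $\projection{A_{i+1}^j}{i,j}{i+2,j}(f_i^j)=\Oplane{1}f_{i+1}^j\cap A_{i+2}^j=(f_{i+1}^j\vee f_{i+2}^j)\cap A_{i+2}^j=f_{i+2}^j$, and likewise in the second net direction. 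By the uniqueness part of Theorem~\ref{th:13}, $\hat f=f$, so $g$ is an asymptotic transform of $f$ with axis congruence $A$; in particular $\Oplane{1}f=\Oplane{2}g$, $\Oplane{2}f=\Oplane{1}g$, and $g_i^j\in A_i^j$.

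Next I would compute $\Laplace{11}f$ straight from the definition. Writing $h\coloneqq\Laplace{1}f$, the two points $h_i^j$, $h_{i+1}^j$ lie on the edges $f_i^j\vee f_{i+1}^j$ and $f_{i+1}^j\vee f_{i+2}^j$, hence on $\Oplane{1}f_{i+1}^j$, and on the edges $f_i^{j+1}\vee f_{i+1}^{j+1}$ and $f_{i+1}^{j+1}\vee f_{i+2}^{j+1}$, hence on $\Oplane{1}f_{i+1}^{j+1}$; similarly $h_i^{j+1}$, $h_{i+1}^{j+1}$ lie on $\Oplane{1}f_{i+1}^{j+1}$ and on $\Oplane{1}f_{i+1}^{j+2}$. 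Therefore
\begin{equation*}
  \Laplace{11}f_i^j = (h_i^j\vee h_{i+1}^j)\cap(h_i^{j+1}\vee h_{i+1}^{j+1}) \in \Oplane{1}f_{i+1}^j\cap\Oplane{1}f_{i+1}^{j+1}\cap\Oplane{1}f_{i+1}^{j+2},
\end{equation*}
and under the regularity assumptions this triple intersection is a single point, so $\Laplace{11}f_i^j$ equals it. Now $g_{i+1}^{j+1}\in A_{i+1}^{j+1}\subseteq\Oplane{1}f_{i+1}^{j+1}$; moreover $g_{i+1}^{j+1}\in\Oplane{2}g_{i+1}^j=\Oplane{1}f_{i+1}^j$ and $g_{i+1}^{j+1}\in\Oplane{2}g_{i+1}^{j+2}=\Oplane{1}f_{i+1}^{j+2}$, since $g_{i+1}^{j+1}$ is one of the three points spanning each of $\Oplane{2}g_{i+1}^j$ and $\Oplane{2}g_{i+1}^{j+2}$. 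Hence $g_{i+1}^{j+1}=\Laplace{11}f_i^j$. Interchanging the two net directions, the same argument — now using $\Oplane{2}f=\Oplane{1}g$ and $\Laplace{22}f_i^j\in\Oplane{2}f_i^{j+1}\cap\Oplane{2}f_{i+1}^{j+1}\cap\Oplane{2}f_{i+2}^{j+1}$ — gives $g_{i+1}^{j+1}=\Laplace{22}f_i^j$. Comparing, $\Laplace{11}f_i^j=\Laplace{22}f_i^j$ for all $(i,j)$, which is the claim; and $g$ is, up to an index shift, the opposite net of the resulting cycle.

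The main obstacle is the first step: one must make sure that the given conjugate net $f$ really is produced by the construction of Theorem~\ref{th:13} on $A$. This is exactly where the W-congruence hypothesis is consumed — without it there is no consistent companion net $g$ at all. The rest is routine manipulation of osculating planes, provided one carries along the standing regularity hypotheses that make the various intersections have the expected dimension (distinctness of consecutive osculating planes such as $\Oplane{1}f_{i+1}^j\neq\Oplane{1}f_{i+1}^{j+1}$, and general position of the three planes whose intersection defines $\Laplace{11}f_i^j$), so that the iterated Laplace transforms are well-defined points in the first place.
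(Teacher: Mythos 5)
Your proof is correct, but it follows a genuinely different route from the paper's. The paper obtains Theorem~\ref{th:17} as an immediate consequence of Lemma~\ref{lem:19}: the four axis lines attached to an elementary quadrilateral of $f$ form a regulus (the W-congruence hypothesis), the face plane of that quadrilateral meets them in the vertices of $f$, and the coplanarity of the derived points $g_{i+2}=A_{i+2}\cap(f_i\vee A_{i+1})$ --- established through the perspective-collineation argument of Lemma~\ref{lem:18} --- is precisely the planarity of the elementary quadrilaterals of the opposite net; this argument is local, treating one quadrilateral at a time, and is self-contained. You instead argue globally: you feed the given net into the construction of Theorem~\ref{th:13} (verifying that a conjugate net with axis congruence $A$ obeys the two-step projection recursion, so that the construction reproduces $f$ and its uniqueness clause applies), obtain the asymptotic companion $g$ on $A$, and then identify the index-shifted $g$ with both $\Laplace{11}f$ and $\Laplace{22}f$ through the characterization $\Laplace{11}f_i^j=\Oplane{1}f_{i+1}^j\cap\Oplane{1}f_{i+1}^{j+1}\cap\Oplane{1}f_{i+1}^{j+2}$ and its analogue in the second direction. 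Your version buys an explicit description of the opposite net of the cycle as the asymptotic transform of $f$ carried by $A$ (in effect the converse of Proposition~\ref{prop:7}), at the cost of invoking the global consistency result of Theorem~\ref{th:13} and a few additional genericity requirements (the triple intersections of osculating planes must be single points, $f_i^j\notin A_{i+1}^j$, consecutive axis lines skew), which you correctly flag and which are of the same nature as the paper's standing regularity assumptions. Both arguments are sound; the paper's has the advantage that Lemma~\ref{lem:19} is reused for the second construction method in Section~4.2.
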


This theorem is an immediate consequence of Lemma~\ref{lem:19}, below.

\begin{lemma}
  \label{lem:18}
  Consider two spatial quadrilaterals $(a_0,a_1,a_2,a_3)$ and
  $(b_0,b_1,b_2,b_3)$ such that for $i \in \{0,1,2,3\}$ the lines $a_i
  \vee a_{i+1}$ and $b_i \vee b_{i+1}$ intersect in a point $f_i$ and
  span a plane $\varphi_i$ (indices modulo four). Then the four points
  $f_0$, $f_1$, $f_2$, and $f_3$ lie in a plane $\varphi$ if and only
  if the four planes $\varphi_0$, $\varphi_1$, $\varphi_2$, and
  $\varphi_3$ contain a point $f$. In this case, the quadrilaterals
  correspond in a perspective collineation
  \cite[Section~29]{veblen16:_projective_geometry} with center $f$ and
  plane of perspectivity $\varphi$ (\autoref{fig:point-plane}).
\end{lemma}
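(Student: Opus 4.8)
The plan is to deduce both implications from a single perspective collineation, which I would construct explicitly in one direction and obtain for the other by duality. To begin, I would record the incidences that the hypothesis forces. Since the line $a_i\vee a_{i+1}$ lies in $\varphi_i$, all four points $a_i,a_{i+1},b_i,b_{i+1}$ lie in $\varphi_i$ (indices modulo four); in particular $a_i$ and $b_i$ both lie on $\ell_i\coloneqq\varphi_{i-1}\cap\varphi_i$. Hence the four planes $\varphi_0,\dots,\varphi_3$ have a common point $f$ precisely when the four connecting lines $\ell_i=a_i\vee b_i$ pass through $f$, i.e.\ when the quadrilaterals $(a_0,a_1,a_2,a_3)$ and $(b_0,b_1,b_2,b_3)$ are perspective from $f$; this rephrases the plane-side of the asserted equivalence.

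Next I would prove the forward direction: \emph{if the lines $\ell_i$ all pass through a point $f$, the points $f_0,\dots,f_3$ are coplanar and the quadrilaterals correspond in a perspective collineation with centre $f$.} Working in the generic situation --- the quadrilaterals span $\Pspace$, no $\ell_i$ degenerates, and $a_i\ne f\ne b_i$ --- one uses that a perspective collineation with prescribed centre is uniquely determined by the images of four points in general position, provided each point and its image are collinear with the centre (this is transparent from the coordinate form $x\mapsto x+\phi(x)F$ of such collineations, $F$ a representative of $f$, where the four image conditions simply prescribe the linear form $\phi$ on a basis). Since $a_i,b_i,f$ are collinear, there is thus a unique perspective collineation $\kappa$ with centre $f$ and $\kappa(a_i)=b_i$ for all $i$; let $\varphi$ be its axis. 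Now $\kappa$ maps $a_i\vee a_{i+1}$ to $b_i\vee b_{i+1}$, and $f_i$ lies on the latter line as well as on the line $f\vee f_i$ through the centre; these two lines are distinct, because $f\notin b_i\vee b_{i+1}$ (otherwise $f$ would coincide with $b_i$, which is excluded), so $\kappa(f_i)$ --- lying on both --- equals $f_i$. Hence each $f_i$ is fixed by $\kappa$ and, not being the centre, lies on the axis $\varphi$. Therefore $f_0,\dots,f_3$ are coplanar and $\kappa$ is the perspective collineation named in the Lemma, with centre $f$ and plane of perspectivity $\varphi$.

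For the converse --- \emph{coplanarity of $f_0,\dots,f_3$ implies that the $\varphi_i$ have a common point} --- I would invoke a polarity of $\Pspace$. It carries the two quadrilaterals together with their meeting sides into a configuration of exactly the same type; the planes $\varphi_i=(a_i\vee a_{i+1})\vee(b_i\vee b_{i+1})$ go to the intersection points of the corresponding new sides, the points $f_i=(a_i\vee a_{i+1})\cap(b_i\vee b_{i+1})$ go to the planes those sides span, coplanarity of the $f_i$ becomes concurrence of those planes, and concurrence of the $\varphi_i$ becomes coplanarity of those points. Under this dictionary the desired implication is precisely the forward direction already proved, now in the dual projective three-space over the same field, and therefore holds. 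Together the two implications establish the Lemma, and the perspective collineation furnished by the forward direction is the one in its last sentence.

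The geometric content is short; I expect the real work --- and the main obstacle --- to be the careful treatment of degenerate positions: the non-generic cases (quadrilaterals not spanning $\Pspace$, coinciding or vertex-incident lines $\ell_i$, the exceptional value of $\phi$ rendering $\kappa$ singular, or $f_i$ coinciding with $f$) must be enumerated and either excluded via the standing regularity assumptions of the paper or dealt with separately, and one must not overlook the small incidence argument $f\notin b_i\vee b_{i+1}$ that makes $\kappa$ genuinely fix each $f_i$.
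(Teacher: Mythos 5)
Your proof is correct and follows essentially the same route as the paper: self-duality disposes of one implication, and the other is established by exhibiting a perspective collineation with centre $f$ that carries one quadrilateral to the other and fixes the points $f_i$, which therefore lie on its axial plane. The only (minor) difference is that you determine the collineation from the centre and the four pairs $a_i \mapsto b_i$ and then read off the axis, whereas the paper prescribes the centre, the axis $f_0 \vee f_1 \vee f_2$ and the single pair $a_0 \mapsto b_0$ and then deduces $f_3 \in \varphi$; both variants need the same genericity caveats, which you rightly flag.
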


\begin{figure}
  \centering
  \includegraphics{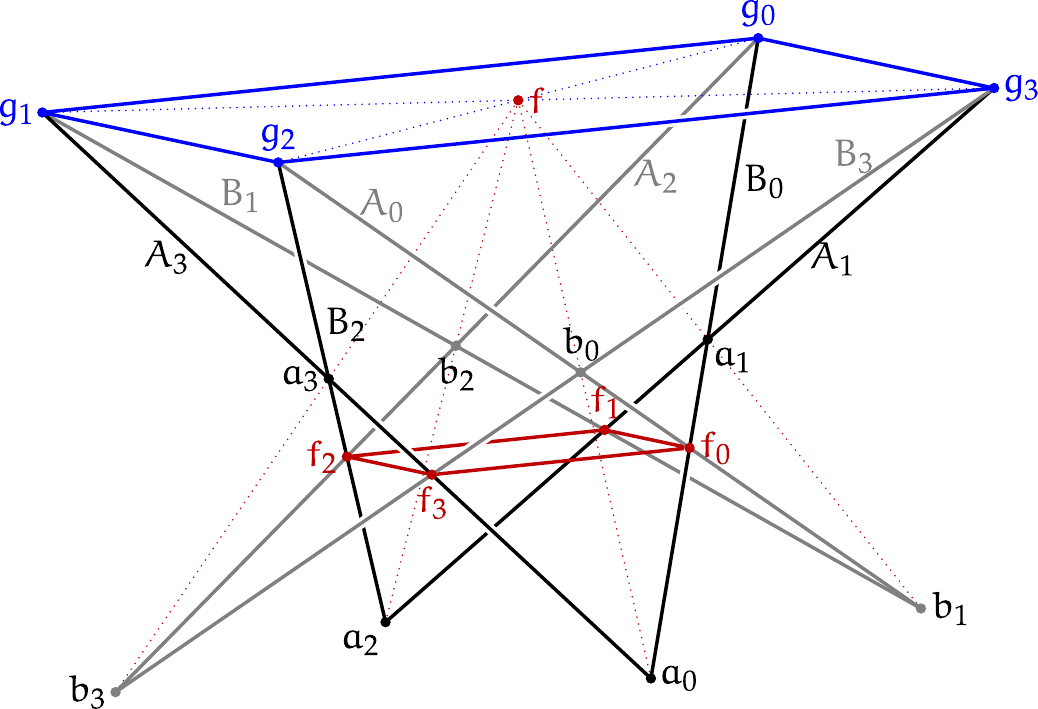}
  \caption{Illustration of Lemma~\ref{lem:18} and Lemma~\ref{lem:19}}
  \label{fig:point-plane}
\end{figure}

\begin{proof}
  The statement is self-dual. Thus, we only have to prove one
  implication. Assume that the four planes $\varphi_0$, $\varphi_1$,
  $\varphi_2$, $\varphi_3$ intersect in a point $f$ and consider the
  central perspectivity with center $f$ and axis $\overline{\varphi}
  \coloneqq f_0 \vee f_1 \vee f_2$ that transforms $a_0$ to
  $b_0$. Clearly, the image of all points $a_i$ is the corresponding
  point $b_i$ for $i \in \{0,1,2,3\}$. Thus, the point $f_3$ also lies
  in~$\overline{\varphi}$.
\end{proof}

\begin{lemma}
  \label{lem:19}
  Given are four skew lines $A_0$, $A_1$, $A_2$, $A_3$ of a regulus
  and a plane $\varphi$ not incident with either of these lines. For
  $i \in \{0,1,2,3\}$ set
  \begin{equation*}
    f_i \coloneqq \varphi \cap A_i
    \quad\text{and}\quad
    g_{i+2} \coloneqq A_{i+2} \cap (f_i \vee A_{i+1})
  \end{equation*}
  (indices modulo four). Then the four points $g_0$, $g_1$, $g_2$,
  $g_3$ lie in a plane as well (\autoref{fig:point-plane}).
\end{lemma}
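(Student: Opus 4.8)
The plan is to set the entire configuration on the quadric carrying the given regulus and to recognize $g_0,g_1,g_2,g_3$ as four points of a plane section of that quadric. The main ingredient from projective geometry is the classical dictionary between projectivities from one regulus of a quadric onto the other and the plane sections of the quadric; the other key fact is the cross-ratio symmetry $\CR{a}{b}{c}{d}=\CR{c}{d}{a}{b}$.

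Let $\mathcal{Q}$ be the quadric whose reguli are $\mathcal{R}$, the regulus containing $A_0,\dots,A_3$, and the complementary regulus $\mathcal{R}'$. First I would show that each line $\ell_i\coloneqq f_i\vee g_{i+2}$ lies in $\mathcal{R}'$. Indeed, $\ell_i$ meets the three pairwise skew lines $A_i$, $A_{i+1}$, $A_{i+2}$: it contains $f_i\in A_i$ and $g_{i+2}\in A_{i+2}$, and it lies in the plane $f_i\vee A_{i+1}$, where it must meet $A_{i+1}$. Since the common transversals of three skew lines of a regulus are precisely the lines of the complementary regulus, $\ell_i\in\mathcal{R}'$; hence $\ell_i$ is the unique generator of $\mathcal{R}'$ through $f_i=\varphi\cap A_i$, and $g_{i+2}=A_{i+2}\cap\ell_i$.

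Next I would consider the map $\lambda\colon\mathcal{R}\to\mathcal{R}'$ sending a line $A$ to the generator of $\mathcal{R}'$ through $\varphi\cap A$. If $\varphi$ is tangent to $\mathcal{Q}$, then $\varphi\cap\mathcal{Q}$ is a pair of lines, one from each regulus, and each $f_i=\varphi\cap A_i$ lies on the $\mathcal{R}'$-component $m$ of that pair (because $A_i$ is skew to the $\mathcal{R}$-component); then $\ell_i=m$ and $g_{i+2}=A_{i+2}\cap m=f_{i+2}$ for all $i$, so $g_0,\dots,g_3$ are collinear and the claim is trivial. Otherwise $\varphi\cap\mathcal{Q}$ is a nondegenerate conic and $\lambda$ is a projectivity, being the composition of the projectivity $A\mapsto\varphi\cap A$ from $\mathcal{R}$ onto $\varphi\cap\mathcal{Q}$ with the projectivity that assigns to a point of that conic its $\mathcal{R}'$-generator. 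By the previous step $\lambda(A_i)=\ell_i$, so the points $f_i=A_i\cap\lambda(A_i)$ are four points of the plane section $\varphi\cap\mathcal{Q}$.

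The final step reproduces this for the $g_i$. Because the cross-ratio of four lines of a regulus is well-defined — it may be evaluated on any transversal, which is exactly the property underlying W-congruences — and satisfies $\CR{A_0}{A_1}{A_2}{A_3}=\CR{A_2}{A_3}{A_0}{A_1}$, there is a projectivity $\rho$ of $\mathcal{R}$ with $\rho(A_j)=A_{j+2}$ for all four $j$ (indices modulo four). Then $\lambda\circ\rho\colon\mathcal{R}\to\mathcal{R}'$ is again a projectivity, and $g_i=A_i\cap\ell_{i+2}=A_i\cap(\lambda\rho)(A_i)$. By the dictionary mentioned above, the locus $\{\,A\cap(\lambda\rho)(A):A\in\mathcal{R}\,\}$ is a plane section of $\mathcal{Q}$, and its plane contains $g_0,g_1,g_2,g_3$. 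I expect the main work to be in pinning down that dictionary precisely — checking that it makes $\lambda$ an honest projectivity in the generic case and that it delivers a genuine plane for $\lambda\rho$ — and in cleanly separating off the tangent-plane degeneracy; the transversal argument of the second step and the cross-ratio identity $\CR{A_0}{A_1}{A_2}{A_3}=\CR{A_2}{A_3}{A_0}{A_1}$ are then routine. (Equivalently, one may phrase the proof as the remark that four points of $\mathcal{Q}$ are coplanar if and only if the two quadruples of generators through them have the same cross-ratio, applied once to the $f_i$ and once to the $g_i$.)
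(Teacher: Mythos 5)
Your proof is correct, but it takes a genuinely different route from the paper's. Both arguments begin the same way: the transversal $f_i\vee g_{i+2}$ meets $A_i$, $A_{i+1}$ (inside the plane $f_i\vee A_{i+1}$) and $A_{i+2}$, hence is the second generator $B_i$ of the quadric through $f_i$. From there the paper feeds the two spatial quadrilaterals $A_0,B_1,A_2,B_3$ and $B_0,A_1,B_2,A_3$ into the self-dual incidence Lemma~\ref{lem:18}: coplanarity of the edge intersections $f_0,\dots,f_3$ forces the four planes $A_i\vee B_i$ through a common point $f$ and makes the quadrilaterals perspective from $f$, whence $f\in(g_0\vee g_2)\cap(g_1\vee g_3)$ and the $g_i$ are coplanar. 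You instead use the Steiner-type dictionary between projectivities of the two reguli and plane sections of the quadric: $\varphi\cap\quadric$ is the graph of a projectivity $\lambda$ with $f_i=A_i\cap\lambda(A_i)$, the identity $\CR{A_0}{A_1}{A_2}{A_3}=\CR{A_2}{A_3}{A_0}{A_1}$ yields a projectivity $\rho$ with $\rho(A_j)=A_{j+2}$ for all four indices, and $g_i=A_i\cap(\lambda\circ\rho)(A_i)$ lies on the plane section that is the graph of $\lambda\circ\rho$. The index check $g_i=A_i\cap\ell_{i+2}$ and the separate treatment of the tangent-plane degeneracy are both in order. What your route buys is a little more information --- the $g_i$ are exhibited as four points of a conic on $\quadric$, and the role of the cross-ratio is made explicit --- and it connects the lemma directly to the W-congruence formalism. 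What it costs is the reliance on the classical fact that graphs of regulus projectivities are exactly the nondegenerate plane sections (equivalently, that four points of $\quadric$ are coplanar iff the two quadruples of generators through them have equal cross-ratio). That fact is true over any commutative field and is in the spirit of \cite[Section~103]{veblen16:_projective_geometry}, but in the paper's synthetic setting it would have to be stated and proved or precisely cited, whereas Lemma~\ref{lem:18} is proved on the spot; the paper's argument also delivers the extra incidence $f=(g_0\vee g_2)\cap(g_1\vee g_3)$, which yours does not.
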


\begin{proof}
  For $i \in \{0,1,2,3\}$ denote by $B_i$ the hyperboloid's second
  generator (different from $A_i$) through $f_i$ so that $g_{i+2} =
  A_{i+2} \cap B_i$ (indices modulo four). Consider now the two
  spatial quadrilaterals whose edges are the lines $A_0$, $B_1$,
  $A_2$, $B_3$ and $B_0$, $A_1$, $B_2$, $A_3$. Since their
  intersection points $f_0$, $f_1$, $f_2$, $f_3$ are coplanar, we can
  apply Lemma~\ref{lem:18} and see that the planes $\varphi_i
  \coloneqq A_i \vee B_i$ intersect in a point $f$. Moreover, the two
  spatial quadrilaterals correspond in a central perspectivity with
  center $f$ and plane of perspectivity $\varphi$. This implies that
  the point $f$ lies on $g_0 \vee g_2$ and $g_1 \vee g_3$. In
  particular, the four points $g_0$, $g_1$, $g_2$ and $g_3$ are
  coplanar.
\end{proof}

Theorem~\ref{th:17} gives us a simple means to test whether a
conjugate net $f$ has a Laplace sequence of period four. Admittedly,
an equally simple test consists of the construction of the Laplace
transforms. A more urgent characterization is that of the diagonal
congruences of a discrete Laplace cycle of period four. They are
necessarily W-congruences but this is not sufficient. Starting with a
W-congruence and an undermined face plane of an elementary
quadrilateral we can compute the intersection points of the face plane
with the corresponding axes and neighbouring vertices (by means of
considerations that already appear in our proof of
Theorem~\ref{th:13}).  Exploiting the planarity conditions, we arrive
at a system of algebraic equations that have no solutions in general.
Theorem~\ref{th:17} makes us conjecture the in special cases this
system can be reduced to a quadratic equation, thus giving rise to a
pair of two discrete conjugate nets on the given congruence. Maybe
even a continuum of solutions is feasible in non-trivial cases.

\subsection{Construction of Laplace cycles of period four}
\label{sec:construction}

Now we are going to present two methods for constructing a Laplace
cycle $f$, $h$, $g$, $k$ of period four. The first method works on the
level of the nets while the second method requires as input partial
information on the net $f$ and the diagonal congruence $A$
through~$f$.

\begin{figure}
  \centering
  \includegraphics{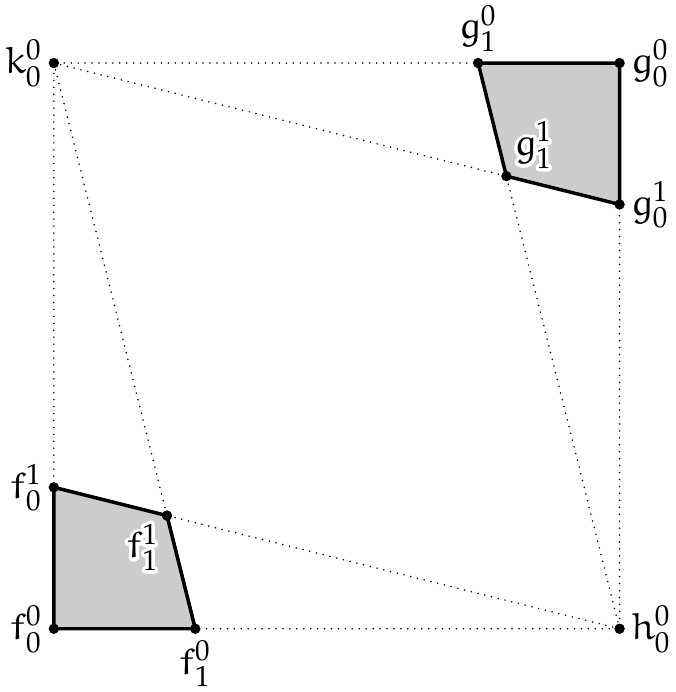}
  \quad
  \includegraphics{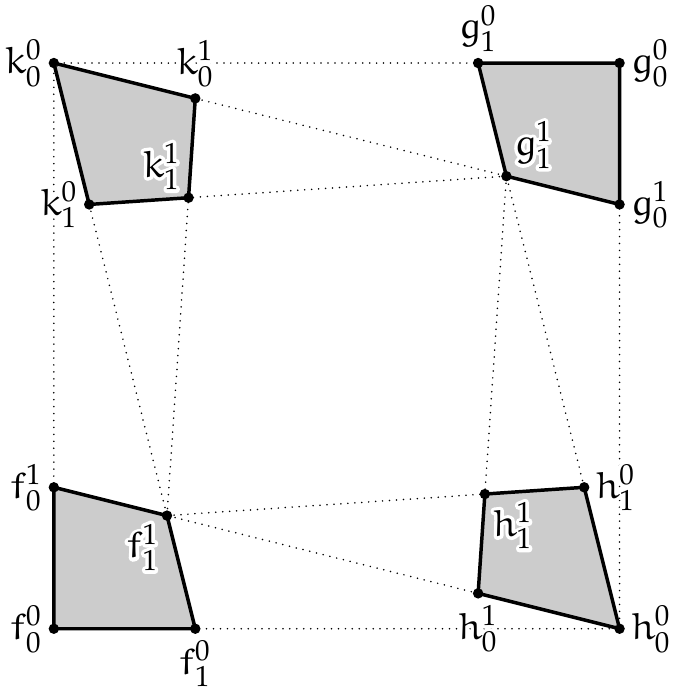}\\
  \includegraphics{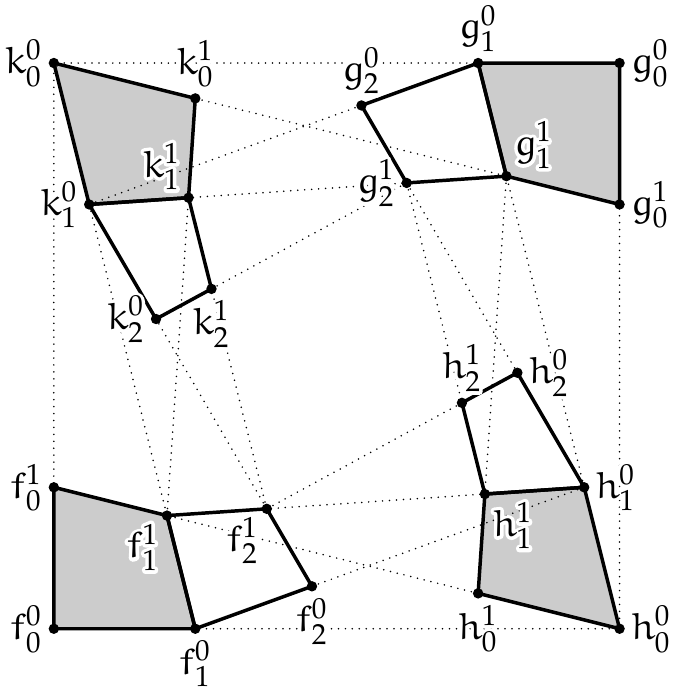}
  \quad
  \includegraphics{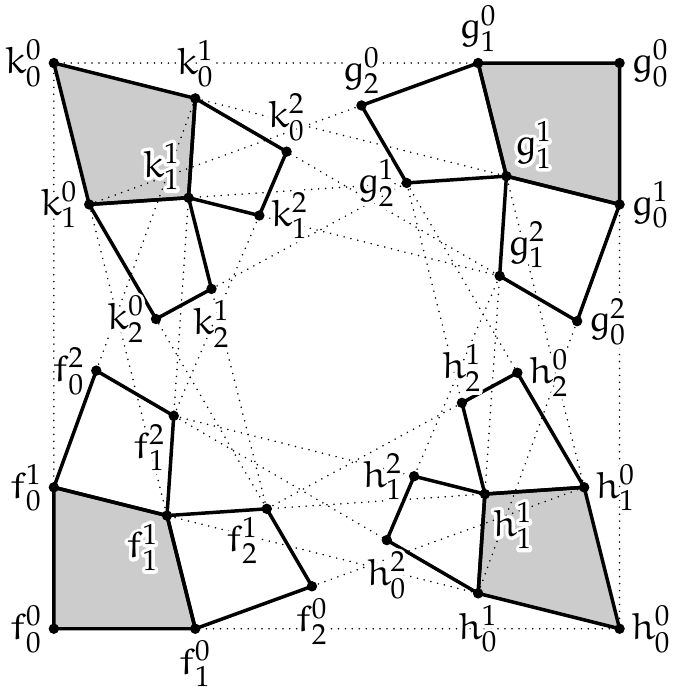}\\
  \includegraphics{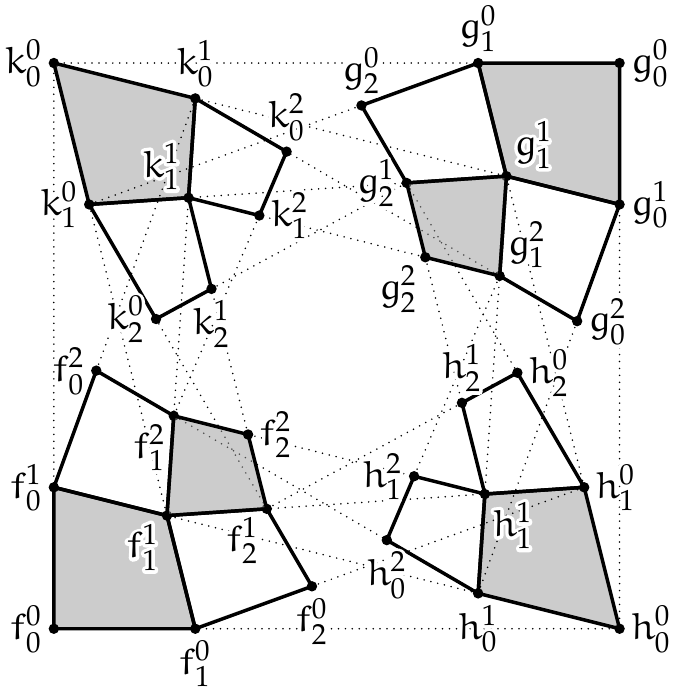}
  \quad
  \includegraphics{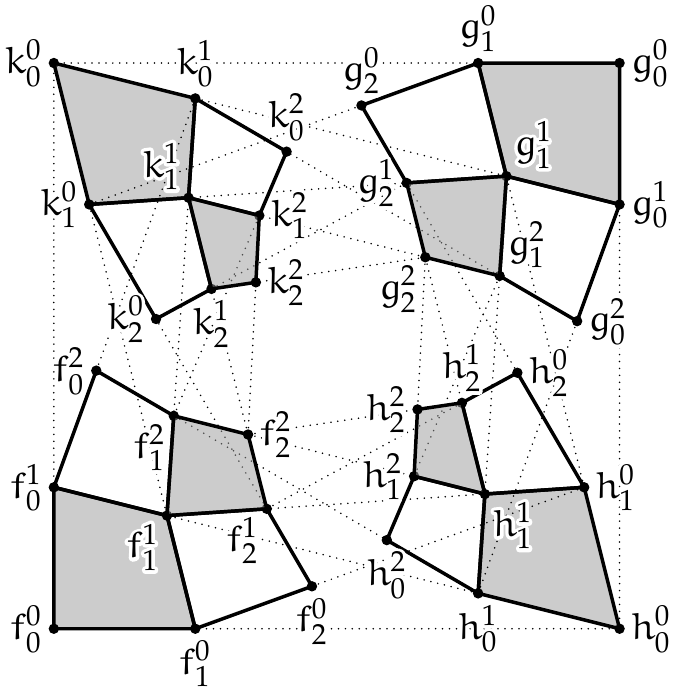}
  \caption{Construction of a Laplace chain of period four}
  \label{fig:laplace-construction}
\end{figure}

The first construction is illustrated in
\autoref{fig:laplace-construction}. The top-left picture shows the
initialization. We arbitrarily prescribe $f_0^0$, $g_0^0$, $h_0^0$,
and $k_0^0$. Then we have four degrees of freedom to choose
\begin{equation*}
  f_1^0 \in f_0^0 \vee h_0^0,\quad
  f_0^1 \in f_0^0 \vee k_0^0,\quad
  g_1^0 \in g_0^0 \vee k_0^0,\quad
  g_0^1 \in g_0^0 \vee h_0^0.
\end{equation*}
These points define
\begin{equation*}
  f_1^1 = (f_1^0 \vee k_0^0) \cap (f_0^1 \vee h_0^0)
  \quad\text{and}\quad
  g_1^1 = (g_1^0 \vee h_0^0) \cap (g_0^1 \vee k_0^0).
\end{equation*}
Next, we have four more degrees of freedom to choose points
\begin{equation*}
  h_1^0 \in h_0^0 \vee g_1^0,\quad
  h_0^1 \in h_0^0 \vee f_0^1,\quad
  k_1^0 \in k_0^0 \vee f_1^0,\quad
  k_0^1 \in k_0^0 \vee g_0^1
\end{equation*}
(\autoref{fig:laplace-construction}, top-right). They define
\begin{equation*}
  h_1^1 = (h_1^0 \vee f_1^1) \cap (h_0^1 \vee g_1^1)
  \quad\text{and}\quad
  k_1^1 = (k_1^0 \vee g_1^1) \cap (k_0^1 \vee f_1^1).
\end{equation*}

The same steps can be repeated for obtaining adjacent quadrilaterals
in the first or second net direction. The respective steps in the
first and second net direction are depicted in the middle row of
\autoref{fig:laplace-construction}. The only difference is that
certain input points are already prescribed so that only four degrees
of freedom per face remain. Once all points $f_i^j$, $g_i^j$, $h_i^j$,
$k_i^j$ with $i \in \{0,1\}$ and $j \in \ZSet$ or $j \in \{0,1\}$ and
$i \in \ZSet$ are found, the remaining points of all four nets are
uniquely determined (\autoref{fig:laplace-construction}, bottom
row). A Laplace cycle of period four in three-space is shown in
\autoref{fig:laplace-construction-3d}.

\begin{figure}
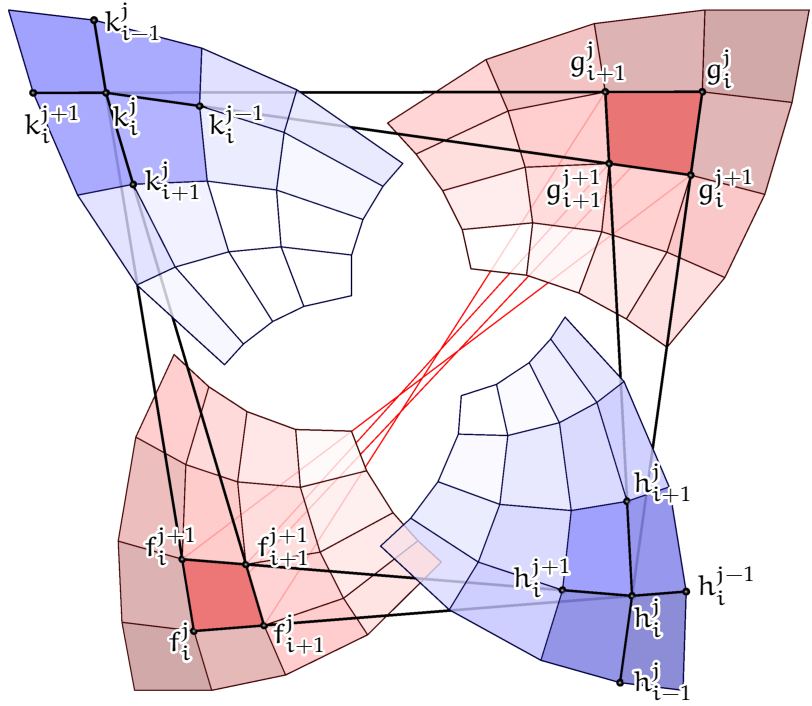

  \centering
  \begin{overpic}{img/laplace-construction-3d}
    \put(20,5){\contour{white}{$f_i^j$}}
    \put(33,6){\contour{white}{$f_{i+1}^j$}}
    \put(17,17){\contour{white}{$f_i^{j+1}$}}
    \put(31,17){\contour{white}{$f_{i+1}^{j+1}$}}
    \put(87,77){\contour{white}{$g_i^j$}}
    \put(67,62){\contour{white}{$g_{i+1}^{j+1}$}}
    \put(86,62){\contour{white}{$g_i^{j+1}$}}
    \put(70,78){\contour{white}{$g_{i+1}^j$}}
    \put(78,8){\contour{white}{$h_i^j$}}
    \put(86,12){\contour{white}{$h_i^{j-1}$}}
    \put(63,13){\contour{white}{$h_i^{j+1}$}}
    \put(78,0){\contour{white}{$h_{i-1}^j$}}
    \put(78,25){\contour{white}{$h_{i+1}^j$}}
    \put(13,71){\contour{white}{$k_i^j$}}
    \put(2,70){\contour{white}{$k_i^{j+1}$}}
    \put(25,70){\contour{white}{$k_i^{j-1}$}}
    \put(17,63){\contour{white}{$k_{i+1}^j$}}
    \put(12,83){\contour{white}{$k_{i-1}^j$}}
  \end{overpic}
  \caption{A Laplace chain of period four}
  \label{fig:laplace-construction-3d}
\end{figure}

The presented construction simultaneously builds up the nets $f$, $h$,
$g$, and $k$. But it is also possible to directly determine only one
of the nets, say $f$, and its axis congruence $A$. The construction is
described in the proof of Theorem~\ref{th:19}, below. It requires some
auxiliary concepts and results.

\begin{definition}[projection between conics]
  Consider two conics $C$, $D \subset \Pspace$ with a common point $z$
  and a straight line $Z$ through $z$ that does not lie in the conics'
  planes. Denote by $\gamma$ the plane spanned by $Z$ and the tangent
  of $C$ in $z$ and by $\delta$ the plane spanned by $Z$ and the
  tangent of $D$ in $z$. If $\gamma \neq \delta$, set $c = \{C \cap
  \delta\} \setminus z$ and $d = \{D \cap \gamma\} \setminus
  z$. Otherwise, set $c = d = z$. The \emph{projection
    $\projection{Z}{C}{D}$ of $C$ onto $D$ from the center $Z$} is the
  map
  \begin{equation}
    \label{eq:9}
    x \in C \mapsto
    \begin{cases}
      z & \text{if $x = c$,} \\
      d & \text{if $x = z$,} \\
      \{D \cap (x \vee Z)\} \setminus z & \text{else.}
    \end{cases}
  \end{equation}
  This definition also makes sense if one or two of the conics
  consists of a pair of intersecting lines. In this case, any line in
  the conic's plane through the intersection point $s$ of the two lines is
  regarded as tangent. Moreover, we require $z \neq s$.
\end{definition}

We do not expect any confusion arising from the use of the same
notation for the projections between conics and between lines. In
fact, the projection between two lines $K$ and $L$ can be seen as a
degenerate case of the projection between the conics $K \cup M$ and $L
\cup M$ where $M$ is a transversal of $K$ and $L$. With this
understanding, we need not distinguish regular and degenerate conics
in the following.

The union of all points on the lines of a regulus is called a
\emph{quadric surface} or simply a \emph{quadric}
\cite[p.~301]{veblen16:_projective_geometry}. The intersection of a
quadric with a plane is a (possibly degenerate) conic.

\begin{lemma}
  \label{lem:21}
  Consider five lines $A_i^j$ and five points $f_i^j \in A_i^j$ with
  $(i,j) \in \{ (0,0), (\pm 1, 0), (0,\pm 1) \}$.  For $(i,j) = (\pm
  1, \pm 1)$ denote by $\quadric_i^j$ the quadric through $A_0^0$,
  $A_i^0$ and $A_0^j$ and by $\conic_i^j$ the (possibly degenerate)
  conic $\quadric_i^j \cap (f_0^0 \vee f_i^0 \vee f_0^j)$. Then the
  composition of projections
  \begin{equation*}
    \bigl(\projection{A_{-1}^0}{\conic_{-1}^1}{\conic_{-1}^{-1}}\bigr)
    \circ
    \bigl(\projection{A_0^1}{\conic_1^1}{\conic_{-1}^1}\bigr)
    \circ
    \bigl(\projection{A_1^0}{\conic_1^{-1}}{\conic_1^1}\bigr)
    \circ
    \bigl(\projection{A_0^{-1}}{\conic_{-1}^{-1}}{\conic_1^{-1}}\bigr)
  \end{equation*}
  is the identity on~$\conic_{-1}^{-1}$.
\end{lemma}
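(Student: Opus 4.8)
The plan is to reduce the statement to the already-established identity for projections between lines, namely the closing of the square in Lemma~\ref{lem:11} combined with the decomposition identities~\eqref{eq:2}. The key observation is that each conic $\conic_i^j = \quadric_i^j \cap (f_0^0 \vee f_i^0 \vee f_0^j)$ carries, besides $f_0^0$, $f_i^0$ and $f_0^j$, a fourth distinguished point, call it $f_i^j \in A_i^j$, obtained exactly as in the proof of Theorem~\ref{th:13}: the quadric $\quadric_i^j$ contains the ruling lines $A_0^0, A_i^0, A_0^j$ of one family and hence $A_i^j$ lies on $\quadric_i^j$ as well (since $A_0^0,A_i^0,A_0^j,A_i^j$ are four lines of the W-congruence that, being skew generators of a regulus by the elementary characterization of W-congruences, determine a unique quadric), and the plane $f_0^0 \vee f_i^0 \vee f_0^j$ meets $A_i^j$ in a point. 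First I would check that the plane $f_0^0 \vee f_i^0 \vee f_0^j$ meets all four axes $A_0^0,A_i^0,A_0^j,A_i^j$, so that $\conic_i^j$ is a genuine (possibly degenerate) conic through these four collinear-free points, and that the two families of generators of $\quadric_i^j$ cut out on $\conic_i^j$ precisely the cross-ratio relations encoded by the projections of Lemma~\ref{lem:11}.

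Next I would identify the conic-projection $\projection{A_1^0}{\conic_1^{-1}}{\conic_1^1}$ with the line-projection $\projection{A_1^0}{0,-1}{0,1}$, and similarly for the other three factors, by noting that both conics $\conic_1^{-1}$ and $\conic_1^1$ share the point $f_0^{-1}$ lying on $A_0^{-1}$... more carefully: the projection from the center-line $A_1^0$ between the two conics agrees, on the common points $f_0^0$ and the relevant axis points, with projection between the axes $A_0^{-1}$ and $A_0^1$ from $A_1^0$. Tracking the four distinguished points around the cycle of four conics, the composition in the statement carries the quadruple $(f_0^0, f_1^0, f_{-1}^0, \text{and the fourth point of }\conic_{-1}^{-1})$ back to itself, because on each axis $A_i^j$ the induced map is exactly the square-closing composition of Lemma~\ref{lem:11} applied in the appropriate net direction. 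Since a projectivity between conics is determined by the images of four points in general position (no three collinear — which holds here by the regularity assumption on the net that forbids collinear quadrilateral vertices), fixing these four points forces the composition to be the identity on $\conic_{-1}^{-1}$.

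The main obstacle I expect is the bookkeeping of degenerate cases and of which generator family of each $\quadric_i^j$ is being used: one must be sure that the tangent-plane conventions in the definition of $\projection{Z}{C}{D}$ match up consistently as one passes from $\conic_i^j$ to the neighbour, so that the composite projectivity is well-defined along the whole cycle (not merely generically) and really does send the four marked points to the four marked points without an exceptional exchange of $z$ with $c$ or $d$. I would handle this by working in the quadric $\quadric_i^j$ itself: projection between two plane sections of the \emph{same} quadric from a ruling line is nothing but the canonical identification of the two sections via the opposite ruling, so the tangent conventions are automatically consistent, and the only remaining point is to verify that consecutive conics $\conic_i^j$ and $\conic_{i'}^{j'}$ in the cycle share a ruling line of the intermediate quadric — which they do, since $A_0^0$ (or $A_1^0$, $A_0^1$, $A_{-1}^0$, $A_0^{-1}$ respectively) is a common generator — and that the shared point on that generator is the same for both, which again follows from the Theorem~\ref{th:13} construction of the fourth vertex. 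Once this is set up, the identity is immediate from Lemma~\ref{lem:11} and~\eqref{eq:2} applied to each of the four axes visited.
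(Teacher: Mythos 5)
Your reduction to Lemma~\ref{lem:11} does not go through, and the real obstacle is in the middle step, not in the degenerate-case bookkeeping you flag at the end. The conic projections induce no maps on the axes $A_{\pm 1}^0$, $A_0^{\pm 1}$: each of these lines lies on only two of the four quadrics (e.g.\ $A_1^0$ lies on $\quadric_1^{1}$ and $\quadric_1^{-1}$ but not on $\quadric_{-1}^{\pm 1}$), so there is no coherent ``square-closing composition'' induced on them, and Lemma~\ref{lem:11} has nothing to act on; the only line common to all four quadrics is $A_0^0$. Your list of marked points is also not a subset of the domain: $\conic_{-1}^{-1}$ contains $f_0^0$, $f_{-1}^0$, $f_0^{-1}$ (not $f_1^0$), and of these only $f_0^0$ is easily seen to be fixed by the four-fold composition; $f_0^{-1}$ and $f_{-1}^0$ are precisely the exceptional common points $z$ of the first and last projections, so verifying that they return to themselves is essentially as hard as the lemma itself. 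Two further points: a projectivity of a conic is determined by \emph{three} points (the ``four points, no three collinear'' criterion is for collineations of the plane, not for maps of a one-dimensional range), and you would still need to show that each factor, hence the composite, is a projectivity of the conic --- true, via the pencil of planes through the center line, but unstated. Finally, the lemma supplies only five lines; the lines $A_{\pm 1}^{\pm 1}$, and with them your ``fourth distinguished point'', are not part of the data.

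The idea buried in your last paragraph is the right one, but it must be anchored at $A_0^0$ rather than at pairs of consecutive conics (which lie on \emph{different} quadrics, so ``plane sections of the same quadric'' does not apply to them directly). Since $A_0^0$ belongs to one ruling of every $\quadric_i^j$, each point $x \in \conic_i^j$ lies on a unique generator of the opposite ruling, which meets $A_0^0$ in a point $b(x)$; this parametrizes all four conics by the single line $A_0^0$. The center line of each projection (say $A_0^1$) lies in the first ruling of \emph{both} adjacent quadrics, so the plane $x \vee A_0^1$ contains the opposite generator through $x$ and hence meets $A_0^0$ exactly in $b(x)$ --- for source and target conic alike. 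Each of the four projections therefore preserves the $A_0^0$-coordinate, and the composite, a bijection of $\conic_{-1}^{-1}$ preserving it, is the identity. This is the paper's argument; it needs neither Lemma~\ref{lem:11} nor a fixed-point count.
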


\begin{proof}
  Given a point $f_1^1 \in \conic_1^1$, there exists a unique line
  $B_1^1$ on $\quadric_1^1$ that intersects $A_0^0$ in a point
  $b_1^1$. Conversely, any points $b_1^1 \in A_0^0$ gives rise to a
  point $f_1^1 \in \conic_1^1$. Similarly, the points of
  $\conic_{-1}^1$, $\conic_1^{-1}$, and $\conic_{-1}^{-1}$ are related
  to the points of $A_0^0$. The crucial observation is now that
  $f_{-1}^1$ and $f_1^1$ correspond in the projection
  $\projection{A_0^1}{\conic_{-1}^1}{\conic_1^1}$ if and only if their
  corresponding points on $A_0^0$ are equal. This is true due to
  \begin{equation*}
    b_{-1}^1 = (f_{-1}^1 \vee A_0^1) \cap A_0^0 = (f_1^1 \vee A_0^1) \cap A_0^0 = b_1^1.
  \end{equation*}
  Thus, starting from $f_1^1$, we can construct the remaining points
  $f_{-1}^1$, $f_{-1}^{-1}$, and $f_1^{-1}$ and they all correspond to
  the \emph{same} point $b_1^1 = b_{-1}^1 = b_{-1}^{-1} = b_1^{-1} \in
  A_0^0$. We infer that after four successive projections, we obtain
  the initial point $f_1^1$. This finishes the proof.
\end{proof}

\begin{theorem}
  \label{th:19}
  Consider a Laplace cycle $f$, $h$, $g$, $k$, of period four and
  denote its diagonal congruence by $A$. The Laplace cycle is uniquely
  determined by
  \begin{itemize}
  \item suitable values of $f$ and $A$ on all vertices $(i,0)$ and
    $(0,i)$ with $i \in \ZSet$ and
  \item the value of $f_1^1$ in the intersection of the plane $f_0^0
    \vee f_1^0 \vee f_0^1$ and the quadric spanned by the three lines
    $A_0^0$, $A_1^0$, and $A_0^1$.
  \end{itemize}
  Here, the points $f_i^0$, $f_0^i$ and lines $A_i^0$, $A_0^i$ are
  said to be in ``suitable position'', if for any $i \in \ZSet$ the
  conditions
  \begin{itemize}
  \item $A_i^0 \cap A_{i+1}^0 = \varnothing$, $A_0^i \cap A_0^{i+1} =
    \varnothing$,
  \item $A_i^0 \subset f_{i-1}^0 \vee f_i^0 \vee f_{i+1}^0$,
    $A_0^i \subset f_0^{i-1} \vee f_0^i \vee f_0^{i+1}$, and
  \item $f_i^0 \in A_i^0$, $f_0^i \in A_0^i$
  \end{itemize}
  are fulfilled.
\end{theorem}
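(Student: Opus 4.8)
The plan is to build the Laplace cycle from the stated initial data by a construction that mirrors the one in the proof of Theorem~\ref{th:13}, and then to verify that the construction is consistent — the latter being the real content, which should be reducible to Lemma~\ref{lem:21}. First I would establish uniqueness: given a Laplace cycle with diagonal congruence $A$, the nets $f$ and $g$ are asymptotic transforms on $A$ (Proposition~\ref{prop:7}), so by the argument in the proof of Theorem~\ref{th:13} the values of $f$ along the axes $(i,0)$ and $(0,i)$ together with $f_1^1$ determine $f$ at every black vertex reachable by the side-projections, and the planarity of the elementary quadrilaterals of $f$ then determines $f$ (hence $A$, since $A_i^j=\Oplane{1}f_i^j\cap\Oplane{2}f_i^j$) at the white vertices as well. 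In particular the whole cycle $f,h,g,k$ is fixed once $f$ and $A$ are known, since $h=\Laplace{1}f$, $k=\Laplace{2}f$, $g=\Laplace{11}f$.

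For existence I would run the construction explicitly. From $f_i^0$, $f_0^i$, $A_i^0$, $A_0^i$ one first produces, for each of the four ``quadrants'' $(\pm1,\pm1)$, the quadric $\quadric_i^j$ spanned by $A_0^0,A_i^0,A_0^j$ and the conic $\conic_i^j=\quadric_i^j\cap(f_0^0\vee f_i^0\vee f_0^j)$, exactly as in Lemma~\ref{lem:21}. The prescribed point $f_1^1$ lies on $\conic_1^1$; projecting successively through the centres $A_1^0$, $A_0^1$, $A_{-1}^0$, $A_0^{-1}$ as in Lemma~\ref{lem:21} yields candidate points $f_1^{-1}\in\conic_1^{-1}$, $f_{-1}^{-1}\in\conic_{-1}^{-1}$, $f_{-1}^1\in\conic_{-1}^1$, and Lemma~\ref{lem:21} guarantees that closing the loop returns to $f_1^1$, so the four ``near'' diagonal vertices of $f$ are well defined. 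The second generator $B_i^j$ of $\quadric_i^j$ through $f_i^j$ defines the axis line $A_i^j$ at the corresponding white and black vertices, and the planarity of the quadrilateral $f_0^0,f_i^0,f_i^j,f_0^j$ (which holds because $f_i^j\in B_i^j$ and the conic lies in that plane) makes $f$ a conjugate net on the patch $\{-1,0,1\}^2$. One then extends outward: along each axis one already has the data, and for a new elementary quadrilateral one repeats the same quadric–conic step, using Lemma~\ref{lem:21} (applied at a shifted centre vertex) to guarantee that the newly constructed diagonal point does not depend on the order in which one walks around it. This is the step I expect to be the main obstacle: one must check that the various closing conditions for the infinitely many elementary quadrilaterals all follow from Lemma~\ref{lem:21}, i.e. that the ``suitable position'' hypotheses propagate and that no new independent consistency condition appears beyond the single loop already handled. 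I would handle it by observing that every elementary quadrilateral of $f$ has its centre vertex playing the role of $(0,0)$ in Lemma~\ref{lem:21}, with the four neighbouring axes already known from the previous stage, so the lemma applies verbatim each time.

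Finally I would verify that the net so constructed genuinely has $A$ as its axis congruence and gives a Laplace cycle of period four. That $A_i^j=\Oplane{1}f_i^j\cap\Oplane{2}f_i^j$ holds by construction, since $A_i^j$ was taken to be the appropriate generator of the relevant quadric and the osculating planes of $f$ are the planes of the conics. Since $A$ is a regulus on each elementary quadrilateral, it is a W-congruence, and then Theorem~\ref{th:17} (equivalently Lemma~\ref{lem:19}) tells us that $f$ gives rise to a Laplace cycle of period four, whose diagonal congruence is $A$. Combined with the uniqueness established at the outset, this proves the theorem.
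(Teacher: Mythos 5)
Your proposal follows essentially the same route as the paper: uniqueness via the determination of the diagonal vertices, existence via the quadric--conic projections of Lemma~\ref{lem:21} applied first to the central loop and then inductively to shifted centre vertices, and Theorem~\ref{th:17} to pass from $f$ and $A$ to the full cycle. The only cosmetic difference is that the paper phrases uniqueness directly through the conic projections (each $f_{\pm1}^{\pm1}$ is forced to lie on a specific conic and in a specific plane) rather than routing it through Proposition~\ref{prop:7} and Theorem~\ref{th:13}, but the substance is the same.
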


\begin{proof}
  We observe at first, that the given data determines the nets $f$ and
  $A$ uniquely. Since $f_1^1$ lies on the quadric of $A_0^0$, $A_1^0$,
  and $A_0^1$, the line $A_1^1$ is well-defined.  The vertex
  $f_{-1}^1$ necessarily lies in the planes
  \begin{equation*}
    f_1^1 \vee A_0^1
    \quad\text{and}\quad
    f_0^0 \vee f_0^1 \vee f_{-1}^0
  \end{equation*}
  and on the quadric $\quadric$ through $A_0^0$, $A_0^1$,
  $A_{-1}^0$. Thus, it is the projection of $f_1^1$ from the conic
  $\conic_1^1 = \quadric_1^1 \cap (f_0^0 \vee f_1^0 \vee f_0^1)$ onto
  the conic $\conic_{-1}^1 = \quadric_{-1}^1 \cap (f_0^0 \vee f_{-1}^0
  \vee f_1^1)$.  Once $f_{-1}^1$ is found, the line $A_{-1}^1$ is
  determined as well. Proceeding in like manner, we can inductively
  construct $f$ as a discrete conjugate net and $A$ as a discrete
  W-congruence. This shows uniqueness of $f$ and $A$ and, by
  Theorem~\ref{th:17}, also of the complete Laplace cycle.

  As to existence, we have to consider all possibilities to run into a
  contradiction. For example, we have to guarantee that the values for
  constructing $f_{-1}^{-1}$ via the routes
  \begin{equation*}
    f_1^1 \xrightarrow{\projection{A_0^1}{\conic_1^1}{\conic_{-1}^1}} f_{-1}^1 \xrightarrow{\projection{A_{-1}^0}{\conic_{-1}^1}{\conic_{-1}^{-1}}} f_{-1}^{-1}
    \quad\text{or}\quad
    f_1^1 \xrightarrow{\projection{A_1^0}{\conic_1^1}{\conic_1^{-1}}} f_1^{-1} \xrightarrow{\projection{A_0^{-1}}{\conic_1^{-1}}{\conic_{-1}^{-1}}} f_{-1}^{-1}
  \end{equation*}
  yield the same result. This is indeed the case by virtue of
  Lemma~\ref{lem:21}. This lemma also shows that, given the vertices
  $f_i^j$ for all $i$, $j \in \{-1, 0, 1\}$ plus the vertex $f_2^0$,
  the vertices $f_2^{-1}$ and $f_2^1$ can be constructed without
  contradiction. The same is true for all vertices $f_i^1$, $f_i^{-1}$
  by induction and for all vertices $f_1^j$, $f_{-1}^j$ by
  analogy. Finally, Lemma~\ref{lem:21} is also responsible for the
  fact that given the vertices $f_i^j$ for $i$, $j \in \{0,1,2\}$ but
  different from $i=j=2$, the two possibilities for constructing
  $f_2^2$ coincide. This allows the contradiction-free construction of
  all remaining vertices so that existence of the conjugate net $f$ is
  shown as well.
\end{proof}

\section{Conclusion}
\label{sec:conclusion}

We have shown that many results of \cite{jonas37:_laplacesche_zyklen}
on asymptotically related nets and, in particular, conjugate nets also
hold in a discrete setting, thus adding some new insight into the
already established geometry of discrete conjugate and asymptotic nets
and their transformations. It turned out that some of Jonas' results
admit similar but not identical discrete interpretations, for example,
Theorem~\ref{th:13} and Corollary~\ref{cor:14}.

The subject of our research is not yet exhausted. Besides the
characterization of diagonal congruences of Laplace cycles of period
four mentioned in \autoref{sec:general-results}, a worthy topic of
future research seem to be the considerations of
\cite{jonas37:_allgemeine_transformationstheorie} on transformations
of Laplace cycles of period four. A proper discrete transformation
theory could integrate our topic into the ``consistency as
integrability'' paradigm of
\cite{bobenko08:_discrete_differential_geometry}. We will attempt this
in a future publication.

%%% Local Variables: 
%%% TeX-master: "lapcyc.tex"
%%% eval: (ispell-change-dictionary "english")
%%% eval: (flyspell-mode t)
%%% End: 

%*flatex input: [lapcyc.bbl]

% flatex input end: [lapcyc.bbl]
%FLATEX-REM:\bibliographystyle{chicago}
%FLATEX-REM:\bibliography{mrabbrev,lapcycb}

\par\bigskip
\flushright
\begin{minipage}{0.55\linewidth}
\footnotesize
Hans-Peter Schröcker, Unit Geometry and CAD, University Innsbruck,
Technikerstraße 13, 6020 Innsbruck, Austria.\\
Email: \href{mailto:hans-peter.schroecker@uibk.ac.at}{\nolinkurl{hans-peter.schroecker@uibk.ac.at}}\\
URL: \url{http://geometrie.uibk.ac.at/schroecker/}
\end{minipage}

\end{document}